 \newtheoremstyle{mytheorem}
 {3pt}
 {3pt}
 {\slshape}
 {}
 {\bfseries}
 {.}
 { }
 {}
\numberwithin{equation}{section}
\theoremstyle{plain}
\newtheorem{theorem}{Theorem}[section]
\newtheorem*{theorem*}{Theorem}
\newtheorem{corollary}[theorem]{Corollary}
\newtheorem{lemma}[theorem]{Lemma}
\newtheorem{conjecture}{Conjecture}[section]
\providecommand{\customgenericname}{}
\newcommand{\newcustomtheorem}[2]{%
	\newenvironment{#1}[1]
	{%
		\renewcommand\customgenericname{#2}%
		\renewcommand\theinnercustomgeneric{##1}%
		\innercustomgeneric
	}
	{\endinnercustomgeneric}
}
\theoremstyle{definition}
\newtheorem{definition}{Definition}[section]
\newtheorem{example}{Example}[section]
\newtheorem*{example*}{Example}
\newtheorem*{examples*}{Examples}
\newtheorem{remark}{Remark}[section]
\newtheorem*{remark*}{Remark}
\newtheorem*{remarks*}{Remarks}
\newtheoremstyle{named}{}{}{\itshape}{}{\bfseries}{.}{.5em}{#1\thmnote{ #3}}
\theoremstyle{named}
\newcommand{\Keywords}[1]{\ifthenelse{\isempty{#1}}{}{\smallskip \smallskip \noindent \textbf{Keywords}. #1}}
\newcommand{\MSC}[2][2010]{\ifthenelse{\isempty{#2}}{}{\smallskip \smallskip \noindent \textbf{#1MSC}. #2}}
\newcommand{\abstractnote}[1]{\ifthenelse{\isempty{#1}}{}{\smallskip \smallskip \noindent \textsuperscript{\dag}#1}}
\def\specialsection{\@startsection{section}{1}%
  \z@{\linespacing\@plus\linespacing}{.5\linespacing}%
  {\normalfont}}
\def\section{\@startsection{section}{1}%
  \z@{.7\linespacing\@plus\linespacing}{.5\linespacing}%
  {\normalfont\scshape}}
\patchcmd{\@settitle}{\uppercasenonmath\@title}{\Large\boldmath}{}{}
\patchcmd{\@settitle}{\begin{center}}{\begin{flushleft}}{}{}
\patchcmd{\@settitle}{\end{center}}{\end{flushleft}}{}{}
\patchcmd{\@setauthors}{\MakeUppercase}{\normalsize}{}{}
\patchcmd{\@setauthors}{\centering}{\raggedright}{}{}
\patchcmd{\section}{\scshape}{\large\bfseries\boldmath}{}{}
\patchcmd{\subsection}{\bfseries}{\bfseries\boldmath}{}{}
\renewcommand{\@secnumfont}{\bfseries}
\patchcmd{\@startsection}{\@afterindenttrue}{\@afterindentfalse}{}{}
\patchcmd{\abstract}{\leftmargin3pc}{\leftmargin1pc}{}{}
\def\maketitle{\par
  \@topnum\z@ 
  \@setcopyright
  \thispagestyle{empty}
  \ifx\@empty\shortauthors \let\shortauthors\shorttitle
  \else \andify\shortauthors
  \fi
  \@maketitle@hook
  \begingroup
  \@maketitle
  \toks@\@xp{\shortauthors}\@temptokena\@xp{\shorttitle}%
  \toks4{\def\\{ \ignorespaces}}
  \edef\@tempa{%
    \@nx\markboth{\the\toks4
      \@nx\MakeUppercase{\the\toks@}}{\the\@temptokena}}%
  \@tempa
  \endgroup
  \c@footnote\z@
  \@cleartopmattertags
}
\newcommand{\fS}{\mathfrak{S}}
\newcommand{\bs}{\mathbf{s}}
\newcommand{\bN}{\mathbf{N}}
\newcommand{\bI}{\mathbf{I}}
\newcommand{\bSL}{\mathbf{SL}}
\newcommand{\bLS}{\mathbf{LS}}
\newcommand{\asc}{\mathsf{asc}}
\newcommand{\des}{\mathsf{des}}
\newcommand{\ides}{\mathsf{ides}}
\newcommand{\exc}{\mathsf{exc}}
\newcommand{\dist}{\mathsf{dist}}
\newcommand{\iar}{\mathsf{iar}}
\newcommand{\tig}{\mathsf{tig}}
\newcommand{\alt}{\mathsf{alt}}
\newcommand{\lar}{\mathsf{lar}}
\newcommand{\sma}{\mathsf{sma}}
\newcommand{\LMI}{\mathsf{LMI}}
\newcommand{\rma}{\mathsf{rma}}
\newcommand{\iasc}{\mathsf{iasc}}
\newcommand{\RLAR}{\mathsf{R}_{\mathsf{lar}}}
\newcommand{\RSMA}{\mathsf{R}_{\mathsf{sma}}}
\newcommand{\cA}{\mathcal{A}}
\newcommand{\cN}{\mathcal{N}}
\newcommand{\cSL}{\mathcal{S}}
\newcommand{\cLS}{\mathcal{L}}
\newcommand{\cG}{\mathcal{G}}
\newcommand{\cF}{\mathcal{F}}
\newcommand{\cI}{\mathcal{I}}
\newcommand{\cC}{\mathcal{C}}
\newcommand{\cP}{\mathcal{P}}
\newcommand{\cH}{\mathcal{H}}
\newcommand{\coSL}{S}
\newcommand{\coLS}{L}
\newcommand{\coN}{N}
\title[Restricted Eulerian distributions]{Burstein's permutation conjecture, Hong and Li's inversion sequence conjecture, and restricted Eulerian distributions}
\author[S. Chern]{Shane Chern}
\address[S. Chern]{Department of Mathematics and Statistics, Dalhousie University, Halifax, NS, B3H 4R2, Canada}
\email{chenxiaohang92@gmail.com; xh375529@dal.ca}
\author[S. Fu]{Shishuo Fu}
\address[S. Fu]{College of Mathematics and Statistics, Chongqing University, Huxi campus, Chongqing 401331, P.R. China}
\email{fsshuo@cqu.edu.cn}
\author[Z. Lin]{Zhicong Lin}
\address[Z. Lin]{Research Center for Mathematics and Interdisciplinary Sciences, Shandong University, Qingdao 266237, P.R. China}
\email{linz@sdu.edu.cn}
\date{}
\begin{document}

\begin{abstract}
	
	Recently, Hong and Li launched a systematic study of length-four pattern avoidance in inversion sequences, and in particular, they conjectured that the number of $0021$-avoiding inversion sequences can be enumerated by the OEIS entry A218225. Meanwhile, Burstein suggested that the same sequence might also count three sets of pattern restricted permutations. The objective of this paper is not only a confirmation of Hong and Li's conjecture and Burstein's first conjecture, but also two more delicate generating function identities with the $\mathsf{ides}$ statistic concerned in the restricted permutation case, and the $\mathsf{asc}$ statistic concerned in the restricted inversion sequence case, which yield a new equidistribution result.

	\Keywords{Permutations, inversion sequences, pattern avoidance, Eulerian statistics, generating functions, equidistributions.}

	\MSC{05A05, 05A15.}
	
\end{abstract}

\maketitle

\section{Introduction}

The history of the study of patterns in permutations dates back to the first volume of MacMahon's 1915 magnum opus \textit{Combinatory Analysis}~\cite[Vol.~I, Sect.~III, Ch.~V]{Mac1915}. Meanwhile, modern treatment of permutation classes is commonly known to have its first appearance in Knuth's Volume 1 of~\textit{The Art of Computer Programming}~\cite[Sect.~2.2.1]{Knu1997}, another masterpiece in discrete mathematics and computer science. See Kitaev's monograph~\cite{Kit2011} and Vatter's survey~\cite{Vat2015} for detailed accounts of this charming history.

Let $\fS_n$ be the set of permutations of $\{1,2,\ldots,n\}=:[n]$. We know that it has a natural coding using $\bI_n:=\{(e_1,e_2,\ldots,e_n):0\le e_i<i\}$ given by
\begin{align}\label{eq:natural-coding}
	\Theta:\pi=(\pi_1,\pi_2,\ldots,\pi_n)\mapsto (e_1,e_2,\ldots,e_n),\quad\text{where $e_i=|\{j<i:\pi_j>\pi_i\}|$}.
\end{align}
Since the sum of the entries in $\Theta(\pi)$ gives the number of \textit{inversions} (i.e., pairs $(i,j)$ with $i<j$ and $\pi_i>\pi_j$) of $\pi$, we usually call sequences in $\bI_n$ \textit{inversion sequences} of length $n$.

Given a sequence $W=w_1w_2\cdots w_n$, we say that $W$ \textit{contains} a fixed pattern $P=p_1p_2\cdots p_k$ if there is a subsequence of $W$ that is order isomorphic to $P$. Otherwise, we say that $W$ \textit{avoids} the pattern $P$. For example, the sequence $w_1w_2\cdots w_6=315616$ contains the pattern $011$ since the subsequence $w_1w_4w_6=366$ is order isomorphic to $011$, but avoids $201$ since none of the subsequences are order isomorphic to this pattern.

In general, for $P_1,\ldots,P_r$, a family of patterns, we denote by $\fS_n(P_1,\ldots,P_r)$ (resp.~$\bI_n(P_1,\ldots,P_r)$) the set of permutations (resp.~inversion sequences) that simultaneously avoid $P_1,\ldots,P_r$.

One of the most important questions in the prolific study of patterns in permutations or inversion sequences concerns the enumeration of sequences avoiding a certain family of patterns. As a well-known example \cite[Proposition 2.1.3]{Kit2011}, one has $|\fS_n(231)|=\frac{1}{n+1}\binom{2n}{n}$, the $n$-th Catalan number. This result is partly attributed to Knuth \cite[pp.~242--243]{Knu1997}. For pattern avoidance in inversion sequences, on the other hand, we witness the pioneering works of Corteel \textit{et al.}~\cite{CMSW2016} and Mansour--Shattuck~\cite{MS2015}. For instance, $|\bI_n(012)|$ equals the $(2n-1)$-th Fibonacci number, while $|\bI_n(000)|$ is the $(n+1)$-th Euler  number. Further intriguing  connections between pattern avoiding permutations and pattern avoiding inversion sequences were investigated extensively in~\cite{LK2018,LK2021,Lin2020,FL2021}. 

It is worth noting that for permutations, when the pattern family contains patterns of different lengths, their enumerations enjoy wide-ranging motivations and may lead to unexpected links. For instance, as a byproduct from their combinatorial study of Schubert polynomials, Billey, Jockusch, and Stanley \cite{BJS1993} showed that
$$|\fS_n(321,2143)|=2^{n+1}-\binom{n+1}{3}-2n-1.$$
A result involving patterns of lengths $4$ and $6$, which was first conjectured by Egge, and later confirmed by Burstein and Pantone \cite{BP2015}, Bloom and Burstein \cite{BB2016}, states that for each $\tau\in\{246135,254613,\allowbreak 524361,546132,263514\}$, we have
$$|\fS_n(2143,3142,\tau)|=\sum_{d=0}^n\frac{1}{n-d+1}\binom{2n-d}{n-d,n-d,d},$$
the $n$-th large Schr\"oder number. Quite recently, during his systematic investigation of West's ``stack-sorting map'', Defant \cite{Def2021} enumerated $\fS_n(2341,3241,45231)$ by recognizing it as the preimages set of $\fS_n(231,321)$.

As the study of such enumerations continues, many open problems also emerge in an endless stream. Let
$$\cA(x)=\sum_{n\ge 1}a(n)x^n:=x+2x^2+6x^3+23x^4+101x^5+480x^6+\cdots$$
be the unique formal power series solution to the  functional equation
\begin{align}\label{eq:A-eq}
	\cA(x)=\big(1+\cA(x)\big)\big(x+\cA(x)^2-x\cA(x)^2\big).
\end{align}
By the Lagrange inversion formula, we have 
\begin{align*}
a(n)&=\frac{1}{n}[x^{n-1}]\biggl(\frac{1}{1-x-x^2}+x\biggr)^n\\
&=\frac{1}{n}\sum_{k=0}^{n-1}\binom{n}{k+1}[x^k](1-x-x^2)^{-(k+1)}\\
&=\frac{1}{n}\sum_{k=0}^{n-1}\binom{n}{k+1}\sum_{j=0}^k\binom{k+j}{j}\binom{j}{k-j}.
\end{align*}
The coefficients of $\cA(x)$ are registered as sequence A218225\footnote{This OEIS sequence starts with the zeroth entry so its generating function is $\cA(x)/x$ in our notation.} in the OEIS~\cite{OEIS}. On December 20\textsuperscript{th}, 2017, Burstein posed in~\cite[A218225]{OEIS} the following conjecture.

\begin{conjecture}[Burstein]\label{conj:Bur}
	The value $a(n)$ counts the number of permutations of length $n$ that avoid one of the following sets of patterns:
	$$(3124, 42153, 24153),\quad (2134, 42153, 24153),\quad (2143, 42135, 24135).$$
\end{conjecture}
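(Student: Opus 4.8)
The plan is to show that all three permutation classes in Conjecture~\ref{conj:Bur} have generating function equal to $\cA(x)$, reducing each to the single functional equation \eqref{eq:A-eq}. A quick check of the dihedral symmetries (reverse, complement, inverse) applied to the three pattern sets suggests that they are \emph{not} related by the trivial Wilf-equivalences, so the argument should combine a generating-function computation for one anchor class with explicit bijections linking the other two to it.

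First I would fix an anchor, say $\fS_n(2134,42153,24153)$, and set up a refined enumeration tracking a catalytic statistic; guided by the appearance of $\ides$ in the paper's stated refinement, I expect the natural parameter to be read off from the left-to-right maxima, or from the first letter $\pi_1$. Decomposing a nonempty avoider $\pi$ by the position of its largest entry, writing $\pi=\alpha\,n\,\beta$, the two length-five patterns $42153$ and $24153$ are what control the interaction between the prefix $\alpha$ and the suffix $\beta$: avoidance should force each of $\alpha,\beta$ to be an order-isomorphic copy of a smaller avoider of a closely related set, with the admissible gluings recorded by the catalytic variable. Translating this decomposition into generating functions yields a functional equation in the counting variable $x$ and the catalytic variable; after clearing the catalytic parameter (either directly or by the kernel method) I aim to arrive at exactly $F=(1+F)\bigl(x+F^2-xF^2\bigr)$, whence $F(x)=\cA(x)$ and $|\fS_n(2134,42153,24153)|=a(n)$.

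For the remaining two sets, $(3124,42153,24153)$ differs from the anchor only in the length-four pattern, $3124$ in place of $2134$; since these two patterns describe the same relative order except for the comparison between the first and third of the four chosen positions, I would construct a length-preserving bijection that locally rewires this comparison while leaving occurrences of the shared length-five patterns intact, transporting one class onto the other. For $(2143,42135,24135)$, whose patterns differ more substantially, I would rerun the ``extract the largest letter'' decomposition from scratch and verify that, despite the different local patterns, the prefix/suffix constraints reassemble into the \emph{same} functional equation \eqref{eq:A-eq}; a composition of a reverse or complement with such a local bijection is a plausible shortcut should one present itself.

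The main obstacle is the anchor computation: choosing the catalytic statistic so that the decomposition by the position of $n$ genuinely closes into a finite functional equation, rather than spawning an ever-growing hierarchy of auxiliary classes. The length-five patterns are delicate precisely because they couple $\alpha$ and $\beta$, so the bookkeeping of which residual class each block must avoid---and with which secondary statistic---is where the argument is most likely to require a carefully chosen parameter together with a kernel-method elimination to recover \eqref{eq:A-eq}.
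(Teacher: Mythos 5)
Your proposal has genuine gaps at both of its load-bearing points, and it is worth knowing that the paper itself only proves the first third of Conjecture \ref{conj:Bur}: the cases $(2134,42153,24153)$ and $(2143,42135,24135)$ are explicitly left open in its concluding section. First, you anchor the generating-function computation on $\fS_n(2134,42153,24153)$ --- precisely one of the two classes that nobody has been able to handle. Your decomposition $\pi=\alpha\,n\,\beta$ by the position of the largest entry is never carried out: the split occurrences of $42153$ and $24153$ (with $n$ playing the role of the $5$, the first three letters in $\alpha$ and the last one in $\beta$) couple the two blocks in a way you only describe as ``should force \ldots\ with the admissible gluings recorded by the catalytic variable,'' and you yourself flag that closing this into a finite system is the main obstacle. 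That closure is the entire content of the problem, and nothing in the proposal resolves it. The paper instead decomposes around the position of the entry $1$: for $I=\{3124,42153,24153\}$ it proves (Lemma \ref{lem:alt-bound}) that the left/right word $w_\pi$ contains no $\mathrm{L}\mathrm{R}\mathrm{L}\mathrm{R}$ subword, so $\alt(\pi)\le 4$, and then closes a finite system via the bijections of Lemmas \ref{lem:LandR} and \ref{lem:F123} (appending $1$, prepending $1$, and an inflation at the penultimate position), arriving at \eqref{eq:A-eq} with the $\ides$ refinement. The reason this works for the first set but not the others is exactly the asymmetry your plan glosses over: the paper notes that the analogue of Lemma \ref{lem:alt-bound} still holds for the other two sets, yet the relations \eqref{id:LandR} break down for $k=2,3$.

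Second, your bridges between the three classes are not available. The ``local rewiring'' bijection you posit between the $3124$-class and the $2134$-class --- replacing the length-four pattern while leaving the length-five constraints intact --- is exactly the kind of pattern-replacement map that is notoriously hard to make well defined (local replacements create new occurrences and interact with the $42153/24153$ conditions); indeed, the equidistribution of $(\LMI,\rma,\ides)$ over these two classes is stated in the paper only as a conjecture, so even the bare equinumerosity your bijection would deliver is open. Your fallback of reverse/complement symmetries is also unavailable, as you yourself observe at the outset that the three sets are not trivially Wilf-equivalent. In short: the only part of the conjecture that currently has a proof is the set $(3124,42153,24153)$, by a decomposition (around the entry $1$, with the $\alt$ statistic) essentially different from yours, and your proposal supplies no mechanism that would close either the anchor computation or the transfer to the remaining two sets.
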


On the other hand, following Chern's work \cite{Che2022} on 0012-avoiding inversion sequences, Hong and Li \cite{HL2021} recently presented a systematic study of other length-four pattern restricted cases. In particular, they conjectured that the series $\cA(x)$ also generates $|\bI_n(0021)|$; see \cite[Conjecture 4.3]{HL2021}.

\begin{conjecture}[Hong--Li]\label{conj:HL}
	The value $a(n)$ counts the number of inversion sequences of length $n$ that avoid the pattern $0021$.
\end{conjecture}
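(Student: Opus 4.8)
The plan is to prove the conjecture by setting up a functional equation for a suitably refined generating function of $0021$-avoiding inversion sequences and then forcing its univariate specialization to satisfy \eqref{eq:A-eq}; since the latter has, as stated, a \emph{unique} formal power series solution, this identifies the two series. Concretely, I would work with a generating tree on $\cI=\bigcup_{n\ge1}\bI_n(0021)$ whose nodes are the $0021$-avoiding inversion sequences and whose children of $e=(e_1,\dots,e_n)$ are those $(e_1,\dots,e_n,p)$ with $0\le p\le n$ that remain $0021$-avoiding. A catalytic variable $u$ records the data controlling how many legal slots $p$ a node admits, and for the refined statement an extra variable $t$ marks $\asc$.

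The first technical step is to characterize legality of an append. A value $p$ is forbidden precisely when there are positions $a<b<c\le n$ with $e_a=e_b<p<e_c$, i.e. $p$ lies strictly between a repeated value of $e$ and a later, strictly larger entry. Writing $b_v$ for the second occurrence of a repeated value $v$ and $\mu(v)=\max\{e_c:c>b_v\}$, the forbidden set is the union of integer intervals $(v,\mu(v))$. The key structural observation, which I expect to carry most of the weight, is that $0021$-avoidance already forces strong monotonicity on $e$: for each repeated $v$, the entries exceeding $v$ that occur after $b_v$ must be weakly increasing (otherwise a descent among them completes a $0021$ with the pair $v,v$). I would leverage this to prove that, along the generating tree, the union $\bigcup_v(v,\mu(v))$ always collapses to a description governed by a bounded amount of data — essentially the current maximum together with a single threshold separating ``safe'' slots from ``blocked'' ones — so that the succession rule becomes a genuine functional equation for $F(x,u)$.

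I would then eliminate the catalytic variable by the kernel method: the kernel admits a unique small power-series root $u_0(x)$, and specializing $u=u_0(x)$ removes $u$ and produces an algebraic relation satisfied by $F(x,1)=\sum_{n\ge1}|\bI_n(0021)|\,x^n$. The remaining step is purely algebraic — simplify this relation to the form \eqref{eq:A-eq}. Since $F(x,1)$ and $\cA(x)$ then satisfy the same equation with the same initial term $x$, uniqueness gives $F(x,1)=\cA(x)$, hence $|\bI_n(0021)|=a(n)$. (As a sanity check and possible route to a bijective argument, the shape $(1+\cA)(x+(1-x)\cA^2)$ of \eqref{eq:A-eq} itself suggests a block decomposition into two $\cA$-factors with an inclusion–exclusion correction $(1-x)$, which one could try to realize directly on $\cI$.)

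The main obstacle is the second step: identifying the minimal set of statistics that makes the append rule \emph{local}. The naive forbidden set is a union of intervals that can a priori depend on unboundedly much of the history of $e$, so the crux is to use the monotonicity consequences of $0021$-avoidance to show that this union is always controlled by a single threshold propagating predictably down the tree. Once this is established, the refined identity costs only the extra variable $t$ marking $\asc$, its specialization at $t=1$ recovers the present statement, and comparison with the analogous functional equation obtained on the permutation side of Conjecture~\ref{conj:Bur} (with $\ides$) yields the promised equidistribution.
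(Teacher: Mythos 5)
Your legality criterion is correct (appending $p$ to $e\in\bI_n(0021)$ is illegal precisely when there are $a<b<c\le n$ with $e_a=e_b<p<e_c$), and so is the monotonicity consequence of $0021$-avoidance. The gap is exactly the claim you flag as carrying the weight: that the forbidden set $\bigcup_v(v,\mu(v))$ is always governed by ``the current maximum together with a single threshold.'' This is false. Consider
\[
e=(0,1,2,3,4,4,6,2,2,4,0,2)\in\bI_{12}.
\]
It avoids $0021$: for every pair of equal entries, at most one later entry exceeds their common value (the pair of $4$'s at positions $5,6$ is followed, among entries $>4$, only by the $6$; the pair of $2$'s at positions $3,8$ only by the $4$ at position $10$; the pair of $0$'s at positions $1,11$ only by the final $2$), and an occurrence of $0021$ needs two such later entries. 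Yet the forbidden appends are exactly $\{1,3,5\}$ (witnessed by $0=e_1=e_{11}<1<2=e_{12}$, by $2=e_3=e_8<3<4=e_{10}$, and by $4=e_5=e_6<5<6=e_7$), so the legal slots $\{0,2,4,6,7,\ldots,12\}$ form three separated runs and admit no single-threshold description. The construction iterates: an initial run $0,1,\ldots,2k$ followed by the blocks $(2k-2,2k-2,2k),\,(2k-4,2k-4,2k-2),\,\ldots,\,(0,0,2)$ is a $0021$-avoiding inversion sequence whose forbidden set is $\{1,3,\ldots,2k-1\}$, i.e.\ $k$ pairwise non-adjacent intervals. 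Hence no bounded label can drive your generating tree: the succession rule genuinely depends on the entire interval structure (note also that the dynamics are volatile --- appending one very large entry merges all intervals into one, while appending small repeated entries spawns new ones later), so there is no functional equation in a single catalytic variable $u$ for the kernel method to act on.

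This is precisely the difficulty the paper's argument avoids by never growing $0021$-avoiders entry by entry. It first splits off the initial ascending run: Lemma~\ref{lem:0021} identifies $\{e\in\bI_{n+p}(0021):\iar(e)=p\}$ with $\bI_{n,p}(021)$, the $021$-avoiding $(p,p+2,\ldots,p+n)$-inversion sequences, trading the length-four pattern for a length-three one whose structure is tractable. It then decomposes $\bI_{n,p}(021)$ by the position of the rightmost tight entry ($e_i=s_i-1$), with catalytic variables recording $p$ and the number of tight entries, and applies the kernel method to the resulting equation; after that, the identification with $\cA(x)$ via uniqueness of the solution of \eqref{eq:A-eq} is the same step you intend. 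So your overall skeleton (refined generating function, kernel elimination, uniqueness) matches the paper's, but your step two --- the structural collapse --- is false as stated, and some reduction of the above kind must replace it before the rest of the plan can run.
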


Yet another topic of substantial significance is the investigation of various statistics for permutations or inversion sequences. Here, we list a few of them that are commonly used: for $w$ a permutation or an inversion sequence of length $n$,
\begin{itemize}[leftmargin=*,align=left]
	\renewcommand{\labelitemi}{$\triangleright$}
	\item \textit{the number of \textbf{asc}ents}: $\asc(w):=|\{i\in[n-1]:w_i<w_{i+1}\}|$;
	
	\item \textit{the number of \textbf{des}cents}: $\des(w):=|\{i\in[n-1]:w_i>w_{i+1}\}|$;
\end{itemize}
for $\pi$ a permutation of length $n$,
\begin{itemize}[leftmargin=*,align=left]
	\renewcommand{\labelitemi}{$\triangleright$}
	\item \textit{the number of \textbf{i}nverse \textbf{des}cents}: $\ides(\pi):=\des(\pi^{-1})$ where $\pi^{-1}$ is the inverse permutation of $\pi$;
	
	\item \textit{the number of \textbf{exc}edances}: $\exc(\pi):=|\{i\in[n]:\pi_i>i\}|$; 
\end{itemize}
for $e$ an inversion sequence of length $n$,
\begin{itemize}[leftmargin=*,align=left]
	\renewcommand{\labelitemi}{$\triangleright$}
	\item \textit{the number of \textbf{dist}inct positive entries}: $\dist(e):=|\{e_i:\text{$i\in[n]$ and $e_i>0$}\}|$.
\end{itemize}

Recall that for $n\ge 1, k\ge 0$, the \textit{Eulerian numbers} are defined by
$$\left \langle \begin{matrix}
	n\\k
\end{matrix} \right \rangle :=\sum_{j=0}^{k+1} (-1)^j \binom{n+1}{j} (k-j+1)^n.$$
We may also define the \textit{Eulerian polynomials} by
$$E_n(t):=\sum_{k=0}^{n-1}\left \langle \begin{matrix}
	n\\k
\end{matrix} \right \rangle t^k.$$
It is known (see, e.g.~\cite[Sect.~1.4]{Per2015}) that
$$\sum_{\pi\in\fS_n}t^{\des(\pi)} = E_n(t).$$
Usually, we say that a statistic is \textit{Eulerian} if its distribution gives $E_n(t)$. Thus, $\des$, and equivalently, $\asc$ and $\ides$ are Eulerian over permutations. Also, a direct implication of the natural coding $\Theta$ in \eqref{eq:natural-coding} is that for any $\pi\in\fS_n$, $\des(\pi)=\asc(\Theta(\pi))$. Hence, $\asc$ is also Eulerian over inversion sequences. For some nontrivial examples, Foata and Sch\"utzenberger \cite{FS1970} showed by a bijection called the ``transformation fondamentale'' from $\fS_n$ to itself that $\exc$ is Eulerian over permutations, and Dumont~\cite{Dum1974} proved that $\dist$ is Eulerian over inversion sequences.

In general, two statistics that are equidistributed over permutations and/or inversion sequences, are a priori no longer equidistributed when the two sets are restricted by certain pattern avoidance conditions, even if the two restricted subsets are already known to be equinumerous. However, there are still a few exceptions. For example, by the works of Corteel \textit{et al.}~\cite{CMSW2016} and Fu \textit{et al.}~\cite{FLZ2018},
\begin{align}\label{eq:C-F}
	\sum_{\pi\in\fS_n(2413,3142)}t^{\des(\pi)} = \sum_{e\in\bI_n(021)} t^{\asc(e)}.
\end{align}
Inspired by the above relation,  Lin and Kim~\cite{LK2018} constructed a bijection between $\fS_n(2413,4213)$ and $\bI_n(021)$, which proves a surprising sextuple equidistribution, including the set-valued statistics of the positions of  descents and ascents.

In this paper, we will focus our attention on Burstein's first conjecture and Hong and Li's conjecture, but also with statistics considered. To begin with, we generalize the series $\cA(x)$ as
\begin{align}\label{eq:A(x,t)-def}
	\cA(x,t):=x + (1 + t) x^2 + (1 + 4 t + t^2) x^3 + (1 + 10 t + 11 t^2 + t^3) x^4 + \cdots,
\end{align}
which is the unique formal power series solution to the functional equation
\begin{align}\label{eq:A(x,t)-eqn}
	\cA(x,t)=\big(1+\cA(x,t)\big)\big(x+t \cA(x,t)^2 - xt^2 \cA(x,t)^2\big).
\end{align}
Note that $\cA(x,1)=\cA(x)$. 

The main result of this paper is the following theorem, which affords us two combinatorial interpretations of $\cA(x,t)$ in terms of the Eulerian distributions over pattern restricted permutations and inversion sequences, respectively.

\begin{theorem}
	Let $\cA(x,t)$ be as in \eqref{eq:A(x,t)-def}. Then
	\begin{align}\label{eq:P-gf}
		\sum_{n\ge 1}x^n\sum_{\pi\in\fS_n(3124, 42153, 24153)}t^{\ides(\pi)} = \cA(x,t)
	\end{align}
	and
	\begin{align}\label{eq:I-gf}
		\sum_{n\ge 1}x^n\sum_{e\in\bI_n(0021)}t^{\asc(e)} = \cA(x,t).
	\end{align}
	Hence, $\ides$ over $\fS_n(3124, 42153, 24153)$ is equidistributed with $\asc$ over $\bI_n(0021)$.
\end{theorem}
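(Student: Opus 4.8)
The plan is to establish the two generating function identities \eqref{eq:P-gf} and \eqref{eq:I-gf} independently; the equidistribution asserted at the end of the theorem is then immediate, since both $\ides$ over $\fS_n(3124,42153,24153)$ and $\asc$ over $\bI_n(0021)$ would have their common distribution recorded by the coefficient of $x^n$ in $\cA(x,t)$. Thus all of the content lies in showing that each restricted distribution satisfies the functional equation \eqref{eq:A(x,t)-eqn}, of which $\cA(x,t)$ is by definition the unique formal power series solution.

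For \eqref{eq:P-gf} I would first pass to inverse permutations in order to replace the inverse-descent statistic by the ordinary descent statistic. Since $\pi$ avoids a pattern $P$ exactly when $\pi^{-1}$ avoids the inverse pattern $P^{-1}$, and $\ides(\pi)=\des(\pi^{-1})$, the map $\pi\mapsto\pi^{-1}$ shows that \eqref{eq:P-gf} is equivalent to
\begin{align*}
	\sum_{n\ge 1}x^n\sum_{\sigma\in\fS_n(2314,32514,31524)}t^{\des(\sigma)}=\cA(x,t),
\end{align*}
where $2314,32514,31524$ are the respective inverses of $3124,42153,24153$. I would then analyze the class $\fS_n(2314,32514,31524)$ by a recursive decomposition — for instance a generating-tree or first-return splitting of the one-line word — refined by an auxiliary (catalytic) parameter such as the value of the last letter, so that inserting or appending a letter has a controlled effect on $\des$. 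This produces a functional equation for a bivariate series, from which the catalytic variable is to be eliminated to recover \eqref{eq:A(x,t)-eqn}.

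For \eqref{eq:I-gf} I would carry out the analogous program directly on $\bI_n(0021)$. Here $e$ avoids $0021$ precisely when there are no indices $i<j<k<l$ with $e_i=e_j<e_l<e_k$, and the natural catalytic variable is the last entry $e_n$, since the ascent possibly created when passing from $e$ to $(e_1,\dots,e_n,e_{n+1})$ is governed by comparing $e_{n+1}$ with $e_n$. Decomposing the $0021$-avoiders according to this last entry — and to the pattern of repeated small values that a forbidden $0021$ would require — yields a second bivariate functional equation, whose catalytic variable should again be eliminated to return \eqref{eq:A(x,t)-eqn}. The shape $\cA=(1+\cA)\big(x+t\cA^2-xt^2\cA^2\big)$ is a useful guide throughout: the factor $1+\cA$ plays the role of an optional initial block, the term $x$ that of a singleton, the term $t\cA^2$ that of two sub-objects glued at a junction creating one ascent (hence the weight $t$), and the subtracted term $-xt^2\cA^2$ that of exactly those glued configurations the avoidance conditions forbid.

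I expect the principal difficulty to be twofold. First, one must find decompositions that are simultaneously \emph{complete} under the three-pattern permutation conditions and under the single but subtle $0021$ condition, and \emph{statistic-preserving}, so that the descent/ascent weights aggregate into precisely the factors $t$ and $t^2$ of \eqref{eq:A(x,t)-eqn}. Second, the elimination of the catalytic variable is likely to require a kernel-method argument, and it is in matching the correction term $-xt^2\cA(x,t)^2$ to the family of excluded configurations, with its correct descent/ascent weighting, that the avoidance hypotheses must be used in full; verifying that this excluded family is itself enumerated by $xt^2\cA(x,t)^2$ is the step I anticipate to be the genuine obstacle.
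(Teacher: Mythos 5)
Your proposal correctly reduces the theorem to showing that each side satisfies the functional equation \eqref{eq:A(x,t)-eqn}, and your computation of the inverse patterns (the inverses of $3124, 42153, 24153$ are indeed $2314, 32514, 31524$) is right. But what you have written is a strategy, not a proof: neither decomposition is actually constructed, no functional equation is ever derived, and the kernel-method elimination is never carried out. You yourself flag the decisive step --- finding a decomposition that is simultaneously complete under the avoidance conditions and statistic-preserving, and matching the correction term $-xt^2\cA(x,t)^2$ to an excluded family --- as the anticipated obstacle; that step is the entire content of the theorem, and it is absent from the proposal.

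Moreover, the specific catalytic parameters you propose are unlikely to work. On the permutation side the paper does not pass to inverses at all: it keeps $\ides$, decomposes $\fS_n(3124,42153,24153)$ according to the position of the entry $1$, and proves the key structural lemma that the associated word of $\mathrm{L}$'s and $\mathrm{R}$'s contains no subword $\mathrm{LRLR}$, so the alternating length is at most $4$. This yields a finite case analysis in which every class satisfies a pure product-type relation (e.g.\ $\cP_{k+1}^R=\cP_k^L\cP$ and $\cP_2^L=xt\,\cC\cP$, where $\cC$ is the $312$-avoiding series with $\ides$), and the functional equation drops out with no catalytic variable and no kernel method; the additivity of $\ides$ under these gluings is exactly what produces the weights $t$ and $t^2$. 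On the inversion-sequence side, the last entry is not an adequate catalytic statistic for $0021$-avoidance with $\asc$: whether an appended letter completes a forbidden pattern depends on repeated values and on the largest entry so far, not on the last entry. The paper instead tracks the largest entry (variable $u$) together with the length $p$ of the initial ascending run (variable $s$), splits sequences according to whether the rightmost maximum lies weakly to the right of the rightmost minimum, first solves a quadratic for general $021$-avoiding sequences, and only then runs the kernel method. Indeed, the Remark following the paper's first proof of Hong and Li's conjecture shows that an apparently natural decomposition (by tight entries) breaks down precisely when $\asc$ is introduced, because of the bifurcating behavior of the statistic --- so the choice of decomposition is not a detail to be filled in later; it is where the theorem lives.
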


The following is a quick consequence of our main result by setting $t=1$.

\begin{corollary}
	\textbf{\emph{(i).}}~Burstein's First Conjecture is true, i.e.~Conjecture \ref{conj:Bur} is true for $(3124, 42153, 24153)$; \textbf{\emph{(ii).}}~Hong and Li's Conjecture \ref{conj:HL} is true.
\end{corollary}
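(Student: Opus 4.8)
The plan is to establish each of the two generating function identities separately by showing that the relevant bivariate generating function satisfies the functional equation \eqref{eq:A(x,t)-eqn}; since that equation has, as noted, a unique formal power series solution, namely $\cA(x,t)$, this forces both generating functions to coincide with $\cA(x,t)$, and the claimed equidistribution of $\ides$ over $\fS_n(3124,42153,24153)$ with $\asc$ over $\bI_n(0021)$ is then immediate. Writing $P(x,t):=\sum_{n\ge1}x^n\sum_{\pi\in\fS_n(3124,42153,24153)}t^{\ides(\pi)}$ and $I(x,t):=\sum_{n\ge1}x^n\sum_{e\in\bI_n(0021)}t^{\asc(e)}$, the whole task reduces to verifying that each of $P$ and $I$ obeys $F=(1+F)\big(x+tF^2-xt^2F^2\big)$.

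For the permutation identity I would first pass to inverse permutations. Because $\ides(\pi)=\des(\pi^{-1})$ and inversion is an involution on $\fS_n$ that carries the class $\fS_n(3124,42153,24153)$ onto $\fS_n(2314,32514,31524)$ (the entrywise inverses of the three forbidden patterns), one has $P(x,t)=\sum_{n\ge1}x^n\sum_{\sigma\in\fS_n(2314,32514,31524)}t^{\des(\sigma)}$, which trades the awkward inverse-descent statistic for the ordinary descent statistic. I would then seek a recursive block decomposition of $\fS_n(2314,32514,31524)$ — for instance by tracking the position of the largest entry $n$, or through a first-return/generating-tree argument that inserts $n$ into the admissible active sites — and verify that the constraints coming from the three forbidden patterns translate precisely into the factorization $(1+F)\big(x+tF^2-xt^2F^2\big)$, each factor of $t$ recording a newly created descent: the factor $(1+F)$ for an optional component, the summand $x$ for the singleton, the summand $tF^2$ for a configuration glued from two subpermutations across a descent, and the corrective $-xt^2F^2$ for an inclusion–exclusion removing exactly those gluings that would manufacture one of the length-$5$ forbidden patterns.

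For the inversion-sequence identity I would build an analogous decomposition of $\bI_n(0021)$ directly, with $\asc$ playing the role of $\des$. A natural handle is the behavior of the sequence around its maximal entries, or a generating-tree rule that appends a last letter $e_n\in\{0,1,\dots,n-1\}$ and records how the set of admissible values shrinks under the requirement of avoiding $0021$, i.e. of forbidding positions $i<j<k<\ell$ with $e_i=e_j<e_\ell<e_k$. The target is again the right-hand side of \eqref{eq:A(x,t)-eqn}: the singleton sequence $(0)$ supplying the leading $x$, an optional prefix block supplying $(1+F)$, a two-block concatenation separated by an ascent supplying $tF^2$, and the correction $-xt^2F^2$ excising exactly those concatenations that would introduce a $0021$ pattern straddling the junction.

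The main obstacle, in both cases, is pinning down the precise decomposition and proving it is a size- and statistic-preserving bijection onto the class; in particular the delicate term $-xt^2F^2=-xt\cdot tF^2$ must be matched to a concrete forbidden local configuration, so the crux of the argument is the case analysis showing that a gluing of two avoiders is itself an avoider exactly outside the excluded family, and that the descents (resp. ascents) accumulate with the prescribed powers of $t$. Once both functional equations are in hand, the uniqueness of the solution to \eqref{eq:A(x,t)-eqn} identifies $P$ and $I$ with $\cA(x,t)$ and completes the proof together with the equidistribution.
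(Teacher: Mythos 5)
Your endgame coincides with the paper's: both sides are identified with $\cA(x,t)$ by showing they satisfy \eqref{eq:A(x,t)-eqn} and invoking uniqueness of its formal power series solution, and your preliminary reduction of $\ides$ to $\des$ via inversion (with correctly computed inverse patterns $2314$, $32514$, $31524$) is sound. But the proposal has a genuine gap: the two functional equations are where essentially all of the work lies, and you never construct either decomposition --- ``I would then seek a recursive block decomposition \dots and verify that the constraints translate precisely into the factorization'' is a statement of intent, not an argument. Worse, there is concrete evidence that the term-by-term combinatorial matching you envision cannot work in the naive form you describe. On the permutation side, the paper's decomposition is anchored not at the largest entry $n$ but at the entry $1$: one records the word $w_\pi$ of $\mathrm{L}$'s and $\mathrm{R}$'s, proves that avoidance of $\{3124,42153,24153\}$ forbids the subword $\mathrm{L}\mathrm{R}\mathrm{L}\mathrm{R}$ (Lemma \ref{lem:alt-bound}, so $\alt(\pi)\le 4$), and then derives a system of generating function relations (Lemmas \ref{lem:LandR} and \ref{lem:F123}). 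Crucially, these relations involve an auxiliary series $\cC(x,t)$ counting $312$-avoiders by $\ides$, and the summands $t\cP^2$ and $-xt^2\cP^2$ of \eqref{eq:A(x,t)-eqn} emerge only after the $\cC$-terms cancel algebraically; they are not in bijection with single ``gluing'' or ``inclusion--exclusion'' configurations, as your plan requires.

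On the inversion-sequence side the obstruction is even sharper: the paper's own remark following its first proof of Conjecture \ref{conj:HL} explains that the natural decomposition (by rightmost tight entry) breaks down precisely when $\asc$ is tracked, because in one case the statistic bifurcates according to a third entry, so no neat generating function results. To carry $\asc$, the paper must pass to general $021$-avoiding sequences over $\bN$, split them according to $\RLAR$ versus $\RSMA$, solve a quadratic for $\cN(x,u,t)$, apply the kernel method to a second system for $\cG$, and only then obtain \eqref{eq:A(x,t)-eqn} through the algebraic substitution $r=\cI/(1+\cI)$ into the cubic \eqref{eq:kernel-poly}. The terms $x$, $tF^2$, $-xt^2F^2$ are artifacts of this algebra; no direct decomposition of $\bI_n(0021)$ realizing them is known, and your generating-tree sketch gives no reason to expect one. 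Note also that even for the corollary alone (the $t=1$ statement), the paper's shorter first proof of (ii) still requires $\bs$-inversion sequences, the $\tig$ statistic, and the kernel method, so there is no elementary shortcut hiding at $t=1$ either. In short: right target, right uniqueness argument, but the core content --- the derivations of the two functional equations --- is absent, and the sketched routes to them would not go through as stated.
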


\section{Burstein's first conjecture with inverse descents}

For convenience, we write $I:=\{3124,42153,24153\}$, the set of patterns in Burstein's first conjecture. Let us define
\begin{align}
	\cP(x,t):=\sum_{n\ge 1}x^n\sum_{\pi\in \fS_{n}(I)}t^{\ides(\pi)}.
\end{align}
We begin with some useful definitions and initial observations.

We associate with each permutation $\pi\in\fS_{n+1}$ a unique word $w_{\pi}$ of length $n$ consisting of letters $\mathrm{L}$ and $\mathrm{R}$ as follows. Suppose $\pi_k=1$, we let
$$
w_{\pi}=w_1w_2\cdots w_n, \text{ where }w_i:=\begin{cases}
	\mathrm{L}, & \text{if $\pi_j=i+1$ for a certain $j<k$,}\\
	\mathrm{R}, & \text{if $\pi_j=i+1$ for a certain $j>k$.}
\end{cases}
$$
Intuitively, as we scan the entries of $\pi$ from $2$ to $n+1$, the word $w_{\pi}$ simply records whether we are at a position that is to the left or right of the entry $1$. We introduce  the length of the longest alternating subword of any word $w$ on the alphabet $\{\mathrm{L},\mathrm{R}\}$:
$$\alt(w):=\max\{|u|:\text{$u$ is a subword of $w$ with no consecutively repeated letters}\}.$$
For a permutation $\pi\in\fS_n$, we define
\begin{itemize}[leftmargin=*,align=left]
	\renewcommand{\labelitemi}{$\triangleright$}
	\item \textit{the \textbf{alt}ernating length}: $\alt(\pi):=0$ for $n=1$, and $\alt(\pi):=\alt(w_{\pi})$ for $n\ge 2$.
\end{itemize}

Denote $\fS_{n,k}$ the set of permutations in $\fS_n$ with alternating length equal to $k$. 

\begin{example}
	The permutation $\pi=547912683$ corresponds to the word $w_{\pi}=\mathrm{R}\mathrm{R}\mathrm{L}\mathrm{L}\mathrm{R}\mathrm{L}\mathrm{R}\mathrm{L}$ (for $2,3$ are to the \textsl{R}ight of $1$; $4$ is to the \textsl{L}eft of $1$; etc.). Thus, $\alt(\pi)=6$ and $\pi\in\fS_{9,6}$.
\end{example}

The next lemma explains why it might be a good idea to refine $\fS_n$ as $\fS_{n,k}$ when we deal with $\fS_n(I)$.

\begin{lemma}\label{lem:alt-bound}
	For each permutation $\pi\in\fS_n(I)$, there is no subword of $w_\pi$ of the form $\mathrm{L}\mathrm{R}\mathrm{L}\mathrm{R}$. Consequenctly, we have $\alt(\pi)\le 4$, and in particular, if $w_\pi$ starts with $\mathrm{L}$ (i.e.~$\pi^{-1}_2<\pi^{-1}_1$), we have $\alt(\pi)\le 3$.
\end{lemma}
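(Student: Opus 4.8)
The plan is to prove the contrapositive of the first assertion: I would show that if $w_\pi$ \emph{does} contain a subword of the form $\mathrm{L}\mathrm{R}\mathrm{L}\mathrm{R}$, then $\pi$ necessarily contains one of the three patterns in $I$, contradicting $\pi\in\fS_n(I)$. The two statements about $\alt(\pi)$ would then follow from an elementary analysis of alternating words on $\{\mathrm{L},\mathrm{R}\}$.

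First I would translate the combinatorial hypothesis into a statement about $\pi$ itself. Writing $k$ for the position of the entry $1$ (so $\pi_k=1$), an occurrence of $\mathrm{L}\mathrm{R}\mathrm{L}\mathrm{R}$ at scan-positions $i_1<i_2<i_3<i_4$ records four values $a<b<c<d$ (namely $a=i_1+1$, $b=i_2+1$, etc., all exceeding $1$) such that $a$ and $c$ lie to the left of $1$ (positions $<k$) while $b$ and $d$ lie to its right (positions $>k$). The crucial observation — and the main source of casework — is that $\mathrm{L}\mathrm{R}\mathrm{L}\mathrm{R}$ constrains only the scan order, i.e.\ the order of the \emph{values}, so the positional order of $a,c$ within the left block and of $b,d$ within the right block is not yet pinned down.

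The heart of the argument is to exhibit a forbidden pattern in each case, keyed on the relative order of $b$ and $d$. If $b$ precedes $d$, then since $c$ lies left of $1$ and $d$ lies right of $b$, the four entries $c,1,b,d$ occur in this positional order with relative ranks $3,1,2,4$ among $\{1,b,c,d\}$, i.e.\ they realize $3124$ — and this holds regardless of whether $a$ precedes or follows $c$. If instead $d$ precedes $b$, I would use all five entries: when $a$ precedes $c$ the subsequence $a,c,1,d,b$ has ranks $2,4,1,5,3$, realizing $24153$, and when $c$ precedes $a$ the subsequence $c,a,1,d,b$ has ranks $4,2,1,5,3$, realizing $42153$. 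Thus in every case $\pi$ contains a member of $I$, the desired contradiction; the only thing to verify carefully is the rank bookkeeping, which is routine once the five values are ordered as $1<a<b<c<d$. I expect this three-way split to be the only genuine obstacle.

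Finally, for the consequences I would argue purely about words. Any alternating subword of length $5$ is either $\mathrm{L}\mathrm{R}\mathrm{L}\mathrm{R}\mathrm{L}$ or $\mathrm{R}\mathrm{L}\mathrm{R}\mathrm{L}\mathrm{R}$, and each contains $\mathrm{L}\mathrm{R}\mathrm{L}\mathrm{R}$ as a subword; since the latter is now forbidden, $\alt(\pi)=\alt(w_\pi)\le 4$. For the refinement, suppose $w_\pi$ starts with $\mathrm{L}$ and, for contradiction, that $\alt(\pi)=4$. A length-$4$ alternating subword must be $\mathrm{R}\mathrm{L}\mathrm{R}\mathrm{L}$ (as $\mathrm{L}\mathrm{R}\mathrm{L}\mathrm{R}$ cannot occur); say it sits at positions $j_1<j_2<j_3<j_4$, where necessarily $j_1\ge 2$ since $w_1=\mathrm{L}\neq\mathrm{R}=w_{j_1}$. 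Prepending the initial letter $w_1=\mathrm{L}$ to the prefix $w_{j_1}w_{j_2}w_{j_3}=\mathrm{R}\mathrm{L}\mathrm{R}$ yields an $\mathrm{L}\mathrm{R}\mathrm{L}\mathrm{R}$ subword at positions $1<j_1<j_2<j_3$, a contradiction. Hence $\alt(\pi)\le 3$ in this case, and everything beyond the main case analysis is bookkeeping.
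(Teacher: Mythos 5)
Your proposal is correct and takes essentially the same approach as the paper: you use $3124$-avoidance to pin down the positional order of the two entries lying right of $1$, then split on the positional order of the two entries lying left of $1$ to exhibit $24153$ or $42153$, which is precisely the paper's case analysis (stated there as a contradiction argument, with $a,b,c,d$ denoting positions rather than values). The only addition is your explicit word-combinatorial verification of $\alt(\pi)\le 4$ and of $\alt(\pi)\le 3$ when $w_\pi$ starts with $\mathrm{L}$, steps the paper leaves implicit.
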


\begin{proof}
	Taking any permutation $\pi\in\fS_n(I)$ with $\pi_k=1$, suppose on the contrary that $\mathrm{L}\mathrm{R}\mathrm{L}\mathrm{R}$ is a subword of $w_\pi$. Then it is always possible to find four indices $a,b,c,d$, such that
	\begin{enumerate}[label=(\arabic*), widest=9, itemindent=*, leftmargin=*]
		\item $a<k$, $b<k$ and $c>k$, $d>k$;
		\item $\pi_a<\pi_c<\pi_b<\pi_d$.
	\end{enumerate} 
	Furthermore, since $\pi$ avoids the pattern $3124$, we must have $d<c$. But then if $a<b$, we see $$\pi_a,\pi_b,\pi_k=1,\pi_d,\pi_c$$ matches the pattern $24153$; while if $a>b$, we get from $$\pi_b,\pi_a,\pi_k=1,\pi_d,\pi_c$$ the pattern $42153$. Either case contradicts with the fact that $\pi\in\fS_n(I)$.
\end{proof}

Throughout, we make the following decomposition
\begin{align*}
	\fS_n(I) &= \biguplus_{i=0}^4 \fS_{n,i}(I).
\end{align*}
Note that if $\alt(\pi)=t$, then the longest alternating subword of $w_{\pi}$ could be either $$u=\underbrace{\mathrm{L}\mathrm{R}\mathrm{L}\ldots}_{t}\quad\text{or}\quad u=\underbrace{\mathrm{R}\mathrm{L}\mathrm{R}\ldots}_{t}.$$
Fortunately, according to the next lemma, it suffices to investigate the first case. For $k\ge 1$, we decompose $\fS_{n,k}$ further into two disjoint subsets (with $\pi_i^{-1}$ the $i$-th entry of $\pi^{-1}$):
\begin{align*}
	\fS_{n,k}^{L}&:=\{\pi\in\fS_{n,k}: \pi^{-1}_2<\pi^{-1}_1\},\\
	\fS_{n,k}^{R}&:=\{\pi\in\fS_{n,k}: \pi^{-1}_2>\pi^{-1}_1\}.
\end{align*}
We then denote the corresponding generating functions for those permutations avoiding the pattern set $I$ as
\begin{align*}
	\cP_k^L(x,t) &:=\sum_{n\ge 2}x^n\sum_{\pi\in \fS_{n,k}^L(I)}t^{\ides(\pi)},\\
	\cP_k^R(x,t) &:=\sum_{n\ge 2}x^n\sum_{\pi\in \fS_{n,k}^R(I)}t^{\ides(\pi)},
\end{align*}
respectively.

\begin{lemma}\label{lem:LandR}
	We have
	\begin{align}\label{id:F^R}
		\cP_1^R(x,t)=x\cP(x,t),
	\end{align} 
	and for $1\le k\le 3$, 
	\begin{align}\label{id:LandR}
		\cP_{k+1}^R(x,t)=\cP_{k}^L(x,t)\cP(x,t).
	\end{align}
\end{lemma}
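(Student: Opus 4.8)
The plan is to prove both identities by one device: peeling off the \emph{first $\mathrm{R}$-run} of the word $w_\pi$. First I record the admissible shapes. By Lemma~\ref{lem:alt-bound} every $\pi\in\fS_n(I)$ has $\alt(\pi)\le 4$ and no subword $\mathrm{L}\mathrm{R}\mathrm{L}\mathrm{R}$; and if $w_\pi$ begins with $\mathrm{R}$ then its longest alternating subword may be taken to begin with $\mathrm{R}$ as well (prepend the leading $\mathrm{R}$ to any maximal alternating subword starting with $\mathrm{L}$, contradicting maximality). Hence for $\pi\in\fS_{n,k+1}^R(I)$ the word is a sequence of maximal runs $\mathrm{R}^{a_1}\mathrm{L}^{b_1}\mathrm{R}^{a_2}\cdots$ with exactly $k+1$ runs, and the values recorded by $\mathrm{R}^{a_1}$ are precisely $\{2,\dots,a_1+1\}$, the $a_1$ smallest entries other than $1$, all lying to the right of the entry $1$. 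Identity \eqref{id:F^R} is the degenerate case $k=0$ (the whole word is $\mathrm{R}^{n-1}$, so $\pi_1=1$): deleting the leading $1$ is a bijection $\fS_{n,1}^R(I)\to\fS_{n-1}(I)$ preserving $\ides$ because an entry $1$ in the first position cannot participate in any pattern of $I$ (none of $3124,42153,24153$ has its least letter first), which yields the factor $x$; I would dispatch this case directly.

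\textbf{A structural lemma.} The first key step for \eqref{id:LandR} is to show that for $\pi\in\fS_{n,k+1}^R(I)$ the first-run values $\{2,\dots,a_1+1\}$ occupy the last $a_1$ positions of $\pi$, i.e.\ they form a block at the very end. I would prove this using $3124$-avoidance alone: if some first-run value $s$ lay to the left of a strictly larger right-of-$1$ value $v$ coming from a later $\mathrm{R}$-run, then any entry $\ell$ of the first $\mathrm{L}$-run satisfies $s<\ell<v$ and sits to the left of $1$, so $\ell,1,s,v$ realizes $3124$. Thus no first-run value precedes a later right-of-$1$ value, and since the global picture of $\pi$ is ``[left-of-$1$ values][$1$][later $\mathrm{R}$-run values][first-run values]'', the first-run values are exactly the trailing block.

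\textbf{The bijection and the statistic.} Given $\pi\in\fS_{n,k+1}^R(I)$, delete its trailing block and standardize to get $\alpha$, and let $\beta$ be the standardization of the deleted block (of size $p=a_1$). Removing a whole leading $\mathrm{R}$-run from $w_\pi$ leaves $\mathrm{L}^{b_1}\mathrm{R}^{a_2}\cdots$, a word with exactly $k$ runs beginning with $\mathrm{L}$, so $\alpha\in\fS_{m,k}^{L}(I)$ with $m=n-a_1$; both $\alpha$ and $\beta$ avoid $I$ as sub-permutations of $\pi$. The inverse relabels $\alpha$ (fixing its minimum $1$, shifting its other values up by $p$), relabels $\beta$ onto the value block $\{2,\dots,p+1\}$, and appends the latter at the far right; the structural lemma guarantees this recovers $\pi$, so the maps are mutually inverse. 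For the weight I use the reading $\ides(\pi)=\#\{j:\text{the value }j\text{ lies to the right of the value }j+1\text{ in }\pi\}$: comparisons internal to the block reproduce $\ides(\beta)$, comparisons internal to $\alpha$ reproduce $\ides(\alpha)$ except for its minimum comparison $j=1$, and that missing descent is exactly replaced by the single interface comparison between the block's top value $p+1$ and the value $p+2$; the two boundary contributions cancel, giving $\ides(\pi)=\ides(\alpha)+\ides(\beta)$. Summing over $n=m+p$ then produces \eqref{id:LandR} as the product $\cP_k^L(x,t)\cP(x,t)$.

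\textbf{Main obstacle.} I expect the delicate point to be checking that the inverse map stays inside $\fS(I)$, i.e.\ that appending the block of smallest entries above $1$ creates no occurrence of $3124$, $42153$, or $24153$ straddling the two parts. The governing fact is that the block carries the smallest non-unit values while the value $1$ sits in the $\alpha$-part, so in any straddling occurrence the block entries are the smallest chosen \emph{except} the isolated global minimum. A short case analysis on which pattern letters land in the block then shows this is impossible for all three patterns (for $3124$ the maximal letter is last and cannot be hosted by the small block; for $42153$ and $24153$ the two least letters are not a suffix, so they cannot be confined to the trailing positions once $1$ is separated off), forcing every occurrence to lie entirely within $\alpha$ or within $\beta$, both of which avoid $I$. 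This confinement argument is what makes the decomposition legitimate and is where I would spend the most care.
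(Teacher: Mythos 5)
Your proof is correct and is essentially the paper's: the map you invert---relabel $\alpha$ above its fixed minimum, recode $\beta$ onto the values $\{2,\dots,p+1\}$, and append it at the right end---is exactly the paper's bijection $\phi(\sigma,\mu)$, and your cancellation of the boundary inverse descent (the lost descent at $j=1$ in $\alpha$ versus the new interface descent between $p+1$ and $p+2$) is precisely the paper's accounting of $\ides(\pi)=\ides(\sigma)+\ides(\mu)$. The only difference is one of detail: your trailing-block structural lemma via $3124$-avoidance and the straddling-pattern case analysis make explicit what the paper compresses into ``a moment of reflection should reveal that $\phi$ is well-defined and indeed a bijection.''
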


\begin{proof}
	Given any permutation $\pi=\pi_1\cdots\pi_n\in\fS_n(I)$, we see that $\hat{\pi}=1(\pi_1+1)\cdots(\pi_n+1)$ is a permutation in $\fS_{n+1,1}^R(I)$. Conversely, every permutation in $\fS_{n+1,1}^R(I)$ must begin with $1$ and recovers uniquely a permutation in $\fS_n(I)$ once the $1$ is removed and all remaining entries decrease by $1$. Noting that this correspondence also preserves the $\ides$ statistic, we arrive at \eqref{id:F^R}.
	
	Similarly, for $1\le k\le 3$, we can construct a bijection 
	\begin{align*}
		\begin{array}{cccc}
			\phi: & \bigcup_{n\ge 2}\fS_{n,k}^L(I)\times\bigcup_{n\ge 1}\fS_{n}(I) & \to & \bigcup_{n\ge 2}\fS_{n,k+1}^R(I),\\[6pt]
			& (\sigma,\mu) & \mapsto & \pi,
		\end{array}
	\end{align*}
	where the image $\pi$ is obtained from the pair $(\sigma,\mu)$ by first concatenating $\sigma$ with $\mu$, then increasing each entry of $\sigma$ larger than $1$ by $|\mu|$ and each entry of $\mu$ by $1$. For instance, 
	$$\phi(6271435,3142)=10\:6\:11\:1\:8\:7\:9\:4\:2\:5\:3.$$
	A moment of reflection should reveal that regardless of $k=1$, $2$ or $3$, $\phi$ is well-defined and indeed a bijection satisfying that $|\phi(\sigma,\mu)|=|\sigma|+|\mu|$. In terms of generating function, we see that \eqref{id:LandR} holds by further noting that $\ides(\pi)=\ides(\sigma)+\ides(\mu)$, since the eliminated inverse descent from $\sigma_1^{-1}>\sigma_2^{-1}$ is compensated by $\pi_{|\mu|+1}^{-1}>\pi_{|\mu|+2}^{-1}$.
\end{proof}

Noting from Lemma \ref{lem:alt-bound} that $\fS_{n,4}^L(I)=\varnothing$ for all $n\ge 1$, we have
\begin{equation}
	\cP_{4}^L(x,t)=0.
\end{equation}
Now, it remains to compute $\cP_1^L(x,t)$, $\cP_2^L(x,t)$ and $\cP_3^L(x,t)$. We collect them in the next lemma.

\begin{lemma}\label{lem:F123}
	We have
	\begin{align}
		\cP_1^L(x,t) &= xt\cP(x,t),\label{F1}\\
		\cP_2^L(x,t) &= xt\cC(x,t)\cP(x,t),\label{F2}\\
		\cP_3^L(x,t) &= t\big(\cP(x,t)-x-xt\cP(x,t)-x\cC(x,t)\big)\cP(x,t),\label{F3}
	\end{align}
	where
	$$\cC(x,t)=\sum_{n\ge 1}x^n\sum_{\pi\in \fS_n(312)}t^{\ides(\pi)}.$$
\end{lemma}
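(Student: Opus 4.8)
The plan is to classify the permutations in $\fS_{n,k}^{L}(I)$ by the block structure of the word $w_\pi$. Since $\alt(\pi)$ equals the number of maximal runs (blocks) of $w_\pi$, a permutation in $\fS_{n,k}^{L}(I)$ has $w_\pi=\mathrm{L}^{a_1}\mathrm{R}^{a_2}\cdots$ with exactly $k$ blocks ($a_i\ge1$), and this data records precisely which of the values $2,\dots,n$ lie to the left or to the right of the entry $1$. For each $k$ I would read off the position of $1$ and the induced partition of $\{2,\dots,n\}$ into value-bands, reconstruct $\pi$, and then track $\ides$ through $\pi^{-1}$. For $k=1$ the word is $\mathrm{L}^{n-1}$, i.e.\ $\pi_n=1$; deleting the trailing $1$ and standardizing gives a bijection $\fS_{n,1}^{L}(I)\to\fS_{n-1}(I)$, and since $\pi^{-1}$ is obtained from the inverse of the smaller permutation by prepending the largest value $n$, exactly one inverse descent is created, which gives \eqref{F1}.

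For $k=2$ the word is $\mathrm{L}^{a}\mathrm{R}^{b}$, so $\pi=A\,1\,B$, where $A$ carries the smallest $a$ values and $B$ the largest $b$ values. Because every entry of $B$ exceeds every entry of $A$, a $312$ inside $A$ together with a single entry of $B$ yields a $3124$; conversely, once $A$ avoids $312$ (hence avoids $I$, since each pattern of $I$ contains $312$) and $B$ avoids $I$, I would verify that $\pi$ avoids $I$ by locating the minimal entry of a hypothetical occurrence—it cannot be $1$, an occurrence confined to $A$ or to $B$ is already excluded, and a crossing occurrence is ruled out by the band order together with the absence of $312$ in $A$. Hence $A$ ranges over $\fS(312)$ and $B$ over $\fS(I)$, and writing out $\pi^{-1}$ shows that $\ides(\pi)=\ides(A)+\ides(B)+1$, the extra descent being forced by the position of $1$; summing produces \eqref{F2}.

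The case $k=3$, with word $\mathrm{L}^{a_1}\mathrm{R}^{a_2}\mathrm{L}^{a_3}$, is the heart of the matter. Now $\pi=P\,1\,Q$ and the values split into three bands $S<M<G$: the small band $S$ and the large band $G$ sit interleaved in the left factor $P$, while the middle band $M$ forms the right factor $Q$, with $|S|,|M|,|G|\ge1$. I would first show that $Q$ is a free $\fS(I)$-factor: any occurrence meeting both $P$ and $Q$ must have its minimal entry in $S$ (if that minimum were $1$, or lay in $M$ or $G$, the band order $1<S<M<G$ would confine the occurrence to one side), and such an occurrence uses only a single entry of $M$; since $M\ne\varnothing$, this turns all coupling into a condition on $P$ alone, decoupling $Q$ as a factor $\cP$. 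The inverse bookkeeping now yields two junction descents—one from the position of $1$, one at the $M$–$G$ interface of $\pi^{-1}$—giving $\ides(\pi)=\ides(P\,1)+\ides(Q)+1-\delta$, where $\delta$ records whether the largest $S$-value precedes the smallest $G$-value inside $P$; crucially $\delta$ cancels against the internal inverse-descent count of $P$, leaving a clean monomial weight.

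The main obstacle is the enumeration of the admissible left factors $P$. Assembling the constraints, $P$ must avoid $I$ as an ordinary pattern; its $S$-subword must avoid $312$ (as above, a $312$ in $S$ plus any $M$-entry is a $3124$); and, more delicately, the interaction of $M$ with the length-five patterns $42153$ and $24153$ forbids two \emph{two-coloured} patterns of shapes $3214$ and $2314$ in which the two largest entries lie in $G$ and the two smallest in $S$. The decisive step I anticipate is to show that the generating function of these bicoloured left factors—weighted by length and by $t$ to the junction-corrected inverse-descent exponent found above—collapses to $t\big(\cP(x,t)-x-xt\cP(x,t)-x\cC(x,t)\big)$. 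The natural way to see this is to match the admissible left factors with the $\fS(I)$-permutations of length at least two after excising exactly two recognizable families, namely the trailing-$1$ permutations (responsible for the $xt\cP$ term, cf.\ \eqref{F1}) and a $312$-governed family (responsible for the $x\cC$ term); checking that such an identification preserves $\ides$ is where the genuine difficulty lies, and it is what finally yields \eqref{F3}.
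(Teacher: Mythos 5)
Your arguments for \eqref{F1} and \eqref{F2} are correct and coincide with the paper's own: appending/deleting a trailing $1$, which creates exactly one inverse descent, gives \eqref{F1}; and the decomposition $\pi=A\,1\,B$ with $A\in\fS(312)$, $B\in\fS(I)$, $\max(A)<\min(B)$ and $\ides(\pi)=\ides(A)+\ides(B)+1$ gives \eqref{F2}.

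For \eqref{F3}, however, there is a genuine gap. You set up the correct band structure $\pi=P\,1\,Q$ with value bands $S<M<G$, and your decoupling claim for $Q$ is sound (any occurrence of a pattern of $I$ meeting both sides can use at most one entry of $M$, because the values of $M$ form an interval while each pattern of $I$, cut at any position, has an early value lying strictly between two late values). But the decisive step --- that the admissible left factors, weighted by the corrected $\ides$ exponent, are generated by $t\big(\cP(x,t)-x-xt\cP(x,t)-x\cC(x,t)\big)$ --- is only announced, not proved: you say yourself that the identification with $\fS(I)$-permutations minus two excised families, and the verification that it preserves $\ides$, ``is where the genuine difficulty lies.'' Your intermediate formula $\ides(\pi)=\ides(P\,1)+\ides(Q)+1-\delta$, with an asserted but unproven cancellation of $\delta$, is precisely the unfinished part. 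The missing idea, which is how the paper closes the argument, is to encode the bicoloured left factor together with the value slot of $M$ as a \emph{single} ordinary permutation by concentrating the whole block $Q$ into one placeholder entry: from $\sigma=\sigma_1\cdots\sigma_n\in\fS_n(I)$ and $\mu\in\fS(I)$, form
\begin{equation*}
\tilde\sigma=(\sigma_1+1)\cdots(\sigma_{n-1}+1)\,1\,(\sigma_n+1)
\end{equation*}
and then inflate the last entry of $\tilde\sigma$ into $\mu$. This is a bijection onto $\bigcup_{n\ge 4}\fS_{n,3}^L(I)$ exactly when $n\ge2$, $\sigma_n>1$ and $\sigma_n<n$ (the last two conditions keep the bands $S$ and $G$ nonempty; the $\sigma$ with $\sigma_n=1$ are counted by $xt\cP(x,t)$ via \eqref{F1}, and those with $\sigma_n=n$ are $\sigma'n$ with $\sigma'\in\fS(312)$, counted by $x\cC(x,t)$), and the statistic then comes for free with no correction term: $\ides(\pi)=\ides(\tilde\sigma)+\ides(\mu)=1+\ides(\sigma)+\ides(\mu)$, since prepending the new maximum to $\sigma^{-1}$ creates exactly one descent (here $\sigma_n>1$ is used), and inflating a single entry into a value-consecutive block adds exactly the internal inverse descents of $\mu$. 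Without this, or an equivalent construction together with the two-way verification of $I$-avoidance, \eqref{F3} --- the heart of the lemma --- remains unproved in your proposal.
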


\begin{proof}
	First, \eqref{F1} can be derived bijectively in the same way as we prove \eqref{id:F^R}, except that now we append $1$ at the right end, thereby increasing the $\ides$ statistic by $1$.
	
	For \eqref{F2}, suppose that $\pi\in\fS_{n,2}^L(I)$ decomposes as $\pi=u 1 v$ with $\max(u)<\min(v)$. Now as two subwords of $\pi$, $u$ and $v$ each should avoid the three patterns contained in $I$. More precisely, we see that $u$ actually avoids the pattern $312$, since any occurrence of $312$ in $u$ together with one entry from $v$ outputs the pattern $3124$. Conversely, by concatenating a permutation $\sigma\in\fS_m(312)$ with $1$ and a permutation $\mu\in\fS_l(I)$, then increasing every entry in $\sigma$ by $1$ and every entry in $\mu$ by $m+1$, we get a permutation $\pi\in\fS_{m+l+1,2}^L(I)$. This bijection gives us \eqref{F2} by noting that $\ides(\pi)=\ides(\sigma)+\ides(\mu)+1$ for the inverse descent from $\pi_1^{-1}>\pi_2^{-1}$ gives the additional ``plus one.''
	
	The proof of \eqref{F3} requires a bijection that is less obvious. We begin with any permutation $\sigma=\sigma_1\cdots\sigma_n\in\fS_n(I)$. Increasing all entries by $1$ and inserting $1$ at the penultimate position yield
	$$\tilde{\sigma}=(\sigma_1+1)\cdots(\sigma_{n-1}+1)1(\sigma_n+1).$$
	Let us decide under what conditions $\tilde{\sigma}$ would be a permutation in $\fS_{n+1,3}^L(I)$.
	\begin{enumerate}[label=(\arabic*), widest=9, itemindent=*, leftmargin=*]
		\item $n\ge 2$. This explains the term ``$-x$.''
		\item $\sigma_n>1$. Otherwise, $\sigma_n=1$ and $\sigma\in\fS_{n,1}^L(I)$. So by \eqref{F1}, this explains the term ``$-xt\cP(x,t)$.''
		\item $\sigma_n<n$. Otherwise, $\sigma_n=n$ and $\sigma_1\cdots\sigma_{n-1}\in\fS_{n-1}(312)$. So this explains the term ``$-x\cC(x,t)$.''
	\end{enumerate}
	If conditions (1), (2) and (3) are all satisfied, we see that $\tilde{\sigma}$ is indeed a permutation in $\fS_{n+1,3}^L(I)$ with $1$ being at the penultimate position. To reach all permutations in $\bigcup_{i\ge 4}\fS_{i,3}^L(I)$, we simply ``inflate'' the last entry of $\tilde{\sigma}$ into any permutation $\mu\in\fS(I)$ (i.e., increase each entry in $\mu$ by $\sigma_n$ to get $\tilde{\mu}$, and then replace the last entry of $\tilde{\sigma}$ with $\tilde{\mu}$), and then adjust the remaining entries of $\tilde{\sigma}$ if necessary (i.e., increase by an appropriate amount and preserve their original relative order relations). Denote the output permutation as $\pi$. Now, this final step of ``inflation'' corresponds to multiplying by $t\cP(x,t)$, by further noting that $\ides(\pi)=\ides(\tilde{\sigma})+\ides(\mu)=1+\ides(\sigma)+\ides(\mu)$. For example, if
	$$\sigma=4213 \quad\text{and}\quad \mu=41523,$$
	we follow the above steps to produce
	$$\tilde{\sigma}=532\:1\:4 \quad\text{and}\quad  \tilde{\mu}=74856 \quad\text{and}\quad \pi=932\:1\:74856.$$
	It is not hard to see that the whole process is invertible. Namely, we take any permutation from $\bigcup_{n\ge 4}\fS_{n,3}^L(I)$, ``concentrate'' all the entries to the right of $1$ into its minimum to recover $\tilde{\sigma}$ after standardization, and then remove $1$ and decrease other entries by $1$ to further recover $\sigma$, which must be a permutation in $\fS_n(I)$ that satisfies conditions (1), (2) and (3). In summary, $(\sigma,\mu)\mapsto \pi$ is a bijection and \eqref{F3} is now proved.
\end{proof}

Finally, we are in a position to conclude the proof of \eqref{eq:P-gf}.

\begin{proof}[Proof of \eqref{eq:P-gf}]
	Putting together these generating function relations, we can deduce the following functional equation:
	\begin{align*}
		\cP(x,t) &= \big(x+\cP_1^R(x,t)\big)+\big(\cP_1^L(x,t)+\cP_2^R(x,t)\big)+\big(\cP_2^L(x,t)+\cP_3^R(x,t)\big)\\
		&\quad+\big(\cP_3^L(x,t)+\cP_4^R(x,t)\big)+\cP_4^L(x,t)\\
		&= \big(1+\cP(x,t)\big)\big[x+xt \cP(x,t)+xt \cC(x,t)\cP(x,t)\\
		&\quad+t\big(\cP(x,t)-x-xt\cP(x,t)-x\cC(x,t)\big)\cP(x,t)\big]\\
		&= \big(1+\cP(x,t)\big)\big(x+t\cP(x,t)^2-xt^2\cP(x,t)^2\big),
	\end{align*}
	which simplifies to an identical equation as \eqref{eq:A(x,t)-eqn}. Hence, we arrive at \eqref{eq:P-gf}.
\end{proof}

\section{Hong and Li's conjecture}

For any inversion sequence $e\in\bI_n$, it is always possible to find a unique index $p$ such that $e_i=i-1$ for $1\le i\le p$ and $e_{p+1}\le p-1$. Note that here $p$ may take the value $n$, in which case, we shall view the entry $e_{n+1}$, which does not exist, as $-\infty$. We define
\begin{itemize}[leftmargin=*,align=left]
	\renewcommand{\labelitemi}{$\triangleright$}
	\item \textit{the \textbf{i}nitial \textbf{a}scending \textbf{r}un}: $\iar(e):=p$, the aforementioned index.
\end{itemize}

Now, we consider a generalization of  inversion sequences introduced by Savage and  Schuster~\cite{SS2012}.

\begin{definition}
	Let $\bs=(s_1,s_2,\ldots,s_n)$ be a fixed sequence of positive integers. A sequence $e=(e_1,e_2,\ldots,e_n)$ is an \textit{$\bs$-inversion sequence} if $0\le e_i<s_i$ for all $1\le i\le n$. Further, an entry $e_i$ is \textit{tight} if $e_i=s_i-1$.
\end{definition}

For $e$ an $\bs$-inversion sequence of length $n$, we define
\begin{itemize}[leftmargin=*,align=left]
	\renewcommand{\labelitemi}{$\triangleright$}
	\item \textit{the number of \textbf{tig}ht entries}: $\tig(e):=|\{i\in[n]:e_i=s_i-1\}|$.
\end{itemize}

From the concept of the initial ascending runs, we are naturally led to the set of $(p,p+2,p+3,\ldots,p+n)$-inversion sequences, denoted by $\bI_{n,p}$, for $n$ and $p$ positive integers. The following lemma is easy but crucial. 

\begin{lemma}\label{lem:0021}
	Fix positive integers $p$ and $n$. There is a natural one-to-one correspondence between 
	$$
	\{e\in\bI_{n+p}(0021):\iar(e)=p\}\quad\text{and}\quad \bI_{n,p}(021). 
	$$
\end{lemma}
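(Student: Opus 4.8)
The plan is to make the ``natural'' correspondence explicit and then check it preserves the relevant pattern classes. To $e$ with $\iar(e)=p$ I associate $f=(e_{p+1},e_{p+2},\ldots,e_{p+n})$, the tail of $e$ after its forced initial run $e_1=0,\ldots,e_p=p-1$; the inverse map prepends this run, sending $f$ to $(0,1,\ldots,p-1,f_1,\ldots,f_n)$. First I would verify this is a bijection between the ambient classes, ignoring the pattern conditions. The condition $\iar(e)=p$ says exactly that $e_{p+1}\le p-1$, i.e.\ $0\le e_{p+1}<p=s_1$, while for $j\ge 2$ the inversion-sequence bound reads $0\le e_{p+j}<p+j=s_j$; these are precisely the defining inequalities of an $\bs$-inversion sequence for $\bs=(p,p+2,p+3,\ldots,p+n)$. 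Hence the map lands in $\bI_{n,p}$ and is visibly invertible, and only the two pattern conditions remain to be matched.

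For the clean implication I would show that any occurrence of $0021$ in $e$, say at positions $i_1<i_2<i_3<i_4$ with $e_{i_1}=e_{i_2}<e_{i_4}<e_{i_3}$, necessarily has $i_2,i_3,i_4>p$. Indeed the initial run $0,1,\ldots,p-1$ is strictly increasing, so its entries are distinct; were $i_2\le p$ we would have $i_1<i_2\le p$ and thus $e_{i_1}<e_{i_2}$, contradicting $e_{i_1}=e_{i_2}$. Consequently $i_3,i_4>i_2>p$ as well, and translating the last three indices by $-p$ produces positions in $[n]$ exhibiting $021$ in $f$. Contrapositively, if $f$ avoids $021$ then the prepended sequence $e$ avoids $0021$, which is the well-definedness of the inverse map.

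The reverse implication is the main obstacle: if $f$ contains $021$ then $e$ must contain $0021$. The naive idea---take an occurrence $f_a<f_c<f_b$ at $a<b<c$ and prepend an earlier copy of the small value $f_a$---works at once when $f_a\le p-1$, since then $e_{f_a+1}=f_a$ sits in the initial run at a position $\le p<p+a$, and $\{f_a+1,\,p+a,\,p+b,\,p+c\}$ realizes $0021$. The trouble is the case $f_a\ge p$, where $f_a$ need not reappear. My plan is not to fix the occurrence but to select the $021$ occurrence whose small entry $f_a$ is minimal (breaking ties by smallest $a$), and to argue it is always ``good,'' i.e.\ either $f_a<p$ or $f_a=f_k$ for some $k<a$. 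For if some $f_k$ with $k<a$ were strictly smaller than $f_a$, then $(k,b,c)$ would be a $021$ occurrence with smaller small entry, contradicting minimality; so every earlier entry is either equal to $f_a$ (a repeat, and we are done) or strictly larger than $f_a$. But if all of $f_1,\ldots,f_{a-1}$ exceeded $f_a\ge p$, then in particular $f_1>p$, contradicting the defining bound $f_1<s_1=p$; the case $a=1$ is the same contradiction with the empty prefix. This is exactly where the distinguished first modulus $s_1=p$ of $\bI_{n,p}$ (rather than $p+1$) is essential. With a good occurrence secured, the earlier copy of $f_a$---either $e_{f_a+1}$ in the initial run or $e_{p+k}$ from the repeat---together with $e_{p+a},e_{p+b},e_{p+c}$ furnishes a $0021$ in $e$. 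Combining the two implications shows the map restricts to a bijection between the two pattern-avoiding classes, which proves the lemma.
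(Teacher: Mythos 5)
Your proposal is correct and uses exactly the same correspondence as the paper: deleting (resp.\ prepending) the forced initial run $(0,1,\ldots,p-1)$, with the observation that $\iar(e)=p$ forces $e_{p+1}<p=s_1$ so the tail lands precisely in $\bI_{n,p}$. The paper dismisses the pattern-avoidance check as routine, while you supply the one genuinely nontrivial detail --- lifting a $021$ occurrence in the tail whose small entry is $\ge p$ to a $0021$ in $e$ via the minimal-small-entry argument (which ultimately rests on $f_1<p$) --- so your write-up is a correct filling-in of that omitted verification rather than a different proof.
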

\begin{proof}
	As every sequence $e\in\bI_{n+p}(0021)$ with $\iar(e)=p$ is of the form $(0,1,2,\ldots,p-1,e_{p+1},e_{p+2},\ldots,e_{p+n})$ such that $0\leq e_{p+1}\leq p-1$, deleting the first $p$ entries of $e$ produces the sequence $\hat{e}=(e_{p+1},e_{p+2},\ldots,e_{p+n})\in\bI_{n,p}$. It is routine to check that $\hat{e}$ is a sequence in $\bI_{n,p}(021)$ and this correspondence is one-to-one. 
\end{proof}

Let
\begin{align}
	 \cI(x):=\sum_{n\geq1}|\bI_n(0021)|x^n.
\end{align}
The objective of this section is a short proof of Hong and Li's conjecture, without considering the $\asc$ statistic. To begin with, we introduce the generating function
$$
\cH(x,s,q):=\sum_{n,p\geq1}x^ns^p\sum_{e\in\bI_{n,p}(021)}q^{\tig(e)}.
$$
We also write the coefficients as
\begin{align*}
	h_p(x,q)&:=[s^p]\cH(x,s,q),\\
	h_{p,k}(x)&:=[s^pq^k]\cH(x,s,q).
\end{align*}

\begin{proof}[First Proof of Hong and Li's Conjecture]
	Let $e=(e_1,\ldots,e_n)\in\bI_{n,p}(021)$. We decompose $e$ by considering its right-most tight entry.  
	There are two cases:
	
	\textbf{(1).}~\textit{$e$ has no tight entry.} Such sequences are in one-to-one correspondence with the sequences in $\bI_{n,p-1}(021)$ and thus contribute the generating function
	$$
	\sum_{p\geq2}h_{p-1}(x,1)s^p=s\cH(x,s,1).
	$$
	
	\textbf{(2).}~\textit{$e$ has at least one tight entry.} Suppose that  $e_{\ell}$ is the right-most tight entry of $e$. There are four subcases:
	
	\textbf{(2a).}~\textit{$\ell=n$.} Every such sequence can be obtained from a (possibly empty) sequence in $\bI_{n-1,p}(021)$ by adding a tight entry at the end. Thus, this case contributes the generating function
	$$
	\sum_{p\geq1}xq\big(1+h_p(x,q)\big)s^p=xq\bigg(\frac{s}{1-s}+\cH(x,s,q)\bigg).
	$$ 
	
	\textbf{(2b).}~\textit{$1\leq\ell<n$ and $e_{\ell+1}=e_{\ell}$.} Each sequence in this case can be obtained from a sequence in $\bI_{n-1,p}(021)$ with at least one tight entry  by inserting exactly to the right of any tight entry a new entry of equal size. Note that this construction is in general one-to-multiple, and if it is executed at the $j$-th tight entry, then the resulting sequence has precisely $j$ tight entries. Therefore,  this case contributes the generating function
	\begin{align*}
		x\sum_{p,k\geq1}h_{p,k}(x)\big(q+q^2+\cdots+q^k\big)s^p&=x\sum_{p\geq1,k\geq0}h_{p,k}(x)\bigg(\frac{q-q^{k+1}}{1-q}\bigg)s^p\\
		&=\frac{xq}{1-q}\big(\cH(x,s,1)-\cH(x,s,q)\big).
	\end{align*}

	\textbf{(2c).}~\textit{$\ell=1<n$ and either $e_{\ell+1}<e_{\ell}$ or $e_{\ell+1}=e_{\ell}+1$.} In this case $e_1=p-1$, and in the latter situation $e_2=p$. Also, the $\tig$ statistic equals $1$. Such sequences are in one-to-one correspondence with $\bI_{n-1,p}(021)$ by first deleting $e_1$, and if $e_2=p$, changing this entry to $p-1$.   It follows that the  sequences in this case contribute the generating function
	$$
	xq\sum_{p\geq1}h_{p}(x,1)s^p=xq\cH(x,s,1).
	$$
	
	\textbf{(2d).}~\textit{$1<\ell<n$ and $e_{\ell+1}<e_{\ell}$.} In this case, if we denote  $\min\{e_i: 1\leq i<\ell\}$ by $m$, then $e$ can be decomposed into two shorter sequences
	$$
	\tilde{e}:=(e_1,e_2,\ldots,e_{\ell-1})\in\bI_{\ell-1,p,m}(021)
	$$
	and 
	$$\bar{e}:=(\bar e_{\ell+1},\bar e_{\ell+2},\ldots,\bar{e}_n)\in\bI_{n-\ell,m+1}(021),$$
	where $\bI_{\ell-1,p,m}(021):=\{e\in\bI_{\ell-1,p}(021):\text{ the minimal entry of $e$ is $m$}\}$ and 
	$$
	\bar e_i:=
	\begin{cases}
		e_i&\quad\text{if $e_i<e_{\ell}$, (one has $e_i\leq m$ since $e$ avoids $021$)}\\
		e_i-e_{\ell}+m+1&\quad\text{if $e_i\geq e_{\ell}$,}
	\end{cases}
	$$
	for $\ell+1\leq i\leq n$. This decomposition is reversible. The key observation is that for fixed $p\geq1$ and $0\leq m\leq p-1$, a member in $\bI_{\ell,p,m}(021)$ can be obtained from a member $e\in\bI_{\ell,p-m}(021)$ provided that $e$ has $0$ as its minimal entry. 
	It follows that the generating function 
	$\sum_{\ell\geq1}x^{\ell}\sum_{e\in\bI_{\ell,p,m}(021)}q^{\tig(e)}$ equals $h_{p-m}(x,q)-h_{p-m-1}(x,q)$, where by convention $h_0(x,q)=0$. Therefore, this case contributes the generating function 
	\begin{align*}
		&\sum_{p\geq1}xqs^p\sum_{m=0}^{p-1}\big(h_{p-m}(x,q)-h_{p-m-1}(x,q)\big) h_{m+1}(x,1)\\
		&\quad=\frac{xq}{s}\sum_{p\geq1}\sum_{m=0}^{p-1}h_{p-m}(x,q)h_{m+1}(x,1)s^{p+1}-xq\sum_{p\geq1}\sum_{m=0}^{p-1}h_{p-m-1}(x,q)h_{m+1}(x,1)s^p\\
		&\quad=xq\big(1/s-1\big)\cH(x,s,q)\cH(x,s,1). 
	\end{align*}

	Summing over all the above cases yields the functional equation for $\cH(q):=\cH(x,s,q)$:
	\begin{equation}\label{eq:C}
		\bigg(\frac{1-q+xq^2}{1-q}-xq\big(1/s-1\big)\cH(1)\bigg)\cH(q)=\bigg(s+xq+\frac{xq}{1-q}\bigg)\cH(1)+\frac{xqs}{1-s}.
	\end{equation}
	We apply the kernel method to solve \eqref{eq:C} by setting the kernel polynomial
	$$\frac{1-q+xq^2}{1-q}-xq\big(1/s-1\big)\cH(1)$$ 
	to be zero. Then the right-hand side of \eqref{eq:C} vanishes. Therefore, $\cH(1)$ satisfies the system of equations
	\begin{equation}\label{sys:ker1}
		\begin{cases}
			\dfrac{1-q+xq^2}{1-q}-xq\big(1/s-1\big)\cH(1)=0,\\[12pt]
			\bigg(s+xq+\dfrac{xq}{1-q}\bigg)\cH(1)+\dfrac{xqs}{1-s}=0.
		\end{cases}
	\end{equation}

	Finally, we recall that any inversion sequence in $\bI_n(0021)$ is either in correspondence with a sequence in $\bI_{n-p,p}(021)$ for some $p$ by Lemma \ref{lem:0021}, or a sequence with only tight entries, which is generated by
	$$x+ x^2+ x^3+\cdots = \frac{x}{1-x}.$$
	Hence,
	$$
	\cI(x)=\frac{x}{1-x}+\cH(x,x,1).
	$$
	Equivalently, 
	$$
	\cH(x,x,1)=\cI(x)-\frac{x}{1-x}. 
	$$
	Substituting this expression into \eqref{sys:ker1} gives the functional equation for $\cI(x)$:
	\begin{equation*}
		\begin{cases}
			\dfrac{1-q+xq^2}{1-q}-(1-x)q\bigg(\cI(x)-\dfrac{x}{1-x}\bigg)=0,\\[12pt]
			\dfrac{x(1+q-q^2)}{1-q}\bigg(\cI(x)-\dfrac{x}{1-x}\bigg)+\dfrac{x^2q}{1-x}=0.
		\end{cases}
	\end{equation*}
	Canceling $q$ in this system gives
	$$\cI(x)=\big(1+\cI(x)\big)\big(x+\cI(x)^2-x\cI(x)^2\big),$$
	which is identical to \eqref{eq:A-eq}.
\end{proof}

\begin{remark}
	The above analysis does not work well if the $\asc$ statistic is taken into account. The main trouble occurs in Case (2c). Recall that in this case, for $e=(e_1,\ldots,e_n)\in\bI_{n,p}(021)$, we have $e_1=p-1$ and either $e_2<p-1$ or $e_2=p$. When $e_2<p-1$, the $\asc$ statistic remains the same after applying the correspondence. However, when $e_2=p$, we find that in the resulting sequence, the $\asc$ statistic decreases by $1$ if $e_3\le p-1$ or $e_3=p+1$; and remains the same value if $e_3=p$. Such a bifurcating behavior of the $\asc$ statistic keeps us away from a neat generating function for this case.
\end{remark}

\section{General \texorpdfstring{$021$}{}-avoiding sequences}

To attach the $\asc$ statistic to the generating function $\cI(x)=\sum_{n\ge 1}|\bI_n(0021)|x^n$, we have to undertake a more subtle analysis for $\bI_{n,p}$. This propels us to look at general $021$-avoiding sequences. Let $\bN$ denote the set of sequences of nonnegative integers, and let $\bN_n$ denote the set of sequences of length $n$ in $\bN$ for $n\ge 1$. For a sequence $w=w_1w_2\cdots w_n\in\bN_n$, we define
\begin{itemize}[leftmargin=*,align=left]
	\renewcommand{\labelitemi}{$\triangleright$}
	\item \textit{the \textbf{lar}gest entry}: $\lar(w):=\max\{w_i:1\le i\le n\}$;
	
	\item \textit{the \textbf{sma}llest entry}: $\sma(w):=\min\{w_i:1\le i\le n\}$;
	
	\item \textit{the \textbf{r}ight-most position of the \textbf{lar}gest entry}: $\RLAR(w):=\max\{i:w_i=\lar(w)\}$;
	
	\item \textit{the \textbf{r}ight-most position of the \textbf{sma}llest entry}: $\RSMA(w):=\max\{i:w_i=\sma(w)\}$. 
\end{itemize}

Now, we split $\bN$ into two disjoint types:
\begin{align*}
	\bSL&:=\{w\in\bN:\RLAR(w)\ge \RSMA(w)\},\\
	\bLS&:=\{w\in\bN:\RLAR(w)< \RSMA(w)\}.
\end{align*}
We also write $\bSL_n=\bSL\cap\bN_n$ and $\bLS_n=\bLS\cap\bN_n$.

Define trivariate generating functions
\begin{align*}
	\cN(x,u)=\cN(x,u,t)&:=\sum_{n\ge 1}x^n\sum_{w\in\bN_n(021)}u^{\lar(w)}t^{\asc(w)},\\
	\cSL(x,u)=\cSL(x,u,t)&:=\sum_{n\ge 1}x^n\sum_{w\in\bSL_n(021)}u^{\lar(w)}t^{\asc(w)},\\
	\cLS(x,u)=\cLS(x,u,t)&:=\sum_{n\ge 1}x^n\sum_{w\in\bLS_n(021)}u^{\lar(w)}t^{\asc(w)}.
\end{align*}
Note that
\begin{align}\label{eq:N-SL-LS}
	\cN(x,u)=\cSL(x,u)+\cLS(x,u).
\end{align}
Let us write the coefficients as
\begin{align*}
	\coN_{[-,\ell]}(x)=\coN_{[-,\ell]}(x,t)&:=[u^\ell]\cN(x,u),\\
	\coSL_{[-,\ell]}(x)=\coSL_{[-,\ell]}(x,t)&:=[u^\ell]\cSL(x,u),\\
	\coLS_{[-,\ell]}(x)=\coLS_{[-,\ell]}(x,t)&:=[u^\ell]\cLS(x,u).
\end{align*}

\begin{lemma}\label{le:SL(021)}
	For any $w\in\bSL_n(021)$, we have $\RLAR(w)=n$. Namely, the last entry in $w$ is the largest.
\end{lemma}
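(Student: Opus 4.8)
The plan is to argue by contradiction: I would assume $\RLAR(w) < n$ and manufacture a forbidden $021$ pattern, exploiting the defining inequality $\RLAR(w) \ge \RSMA(w)$ of $\bSL$. Write $M := \lar(w)$, $m := \sma(w)$, $r := \RLAR(w)$, and $s := \RSMA(w)$, so that $w_r = M$, $w_s = m$, and $s \le r$ by the $\bSL$ hypothesis.

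First I would dispose of the degenerate case: if $w$ were constant, then $\lar(w) = \sma(w)$ and both $\RLAR(w)$ and $\RSMA(w)$ would equal $n$, contradicting $r < n$. Hence under the contradiction hypothesis $w$ is non-constant and $m < M$. This already forces $s \ne r$ (otherwise $w_r$ would be simultaneously the minimum and the maximum), and combined with $s \le r$ it yields $s < r < n$, three genuinely distinct positions.

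Next I would pin down the value at the last position. Since $r = \RLAR(w) < n$ is the rightmost occurrence of the maximum, position $n$ cannot carry the value $M$, so $w_n < M$; since $s = \RSMA(w) \le r < n$ is the rightmost occurrence of the minimum, position $n$ cannot carry the value $m$, so $w_n > m$. Thus $m < w_n < M$, and the triple of positions $s < r < n$ carries values $w_s = m$, $w_r = M$, and $w_n$, arranged as (smallest, largest, middle) --- exactly an occurrence of $021$. This contradicts $w \in \bN_n(021)$, so $\RLAR(w) = n$.

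I expect no serious obstacle; this is a short extremal-position argument. The only point needing care is the bookkeeping of strict versus weak inequalities: one must invoke the \emph{rightmost} nature of both $\RLAR$ and $\RSMA$ to guarantee that $w_n$ is \emph{strictly} between $m$ and $M$ (so the middle slot of $021$ is genuinely filled), and one must use the $\bSL$ hypothesis $\RSMA(w) \le \RLAR(w)$ precisely to place the minimum position to the left of the maximum position, which is what lets the three indices appear in the order $s < r < n$ demanded by the pattern $021$.
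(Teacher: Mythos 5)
Your proof is correct and follows essentially the same route as the paper's: argue by contradiction, use the $\bSL$ hypothesis $\RSMA(w)\le\RLAR(w)$ to force any minimum position to lie strictly left of the rightmost maximum position, show an entry to the right of $\RLAR(w)$ is strictly between $\sma(w)$ and $\lar(w)$, and exhibit the resulting forbidden $021$ pattern. The only (immaterial) difference is that you specialize to the witness positions $(\RSMA(w),\RLAR(w),n)$, whereas the paper works with an arbitrary position $k>\RLAR(w)$ and an arbitrary occurrence of the minimum.
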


\begin{proof}
	Assume that $\RLAR(w)=j<n$. Then for any $k$ with $j<k\le n$, we have $w_k<\lar(w)$. We also claim that $w_k>\sma(w)$. Otherwise, $\RLAR(w)< \RSMA(w)$ and thus $w\not\in \bSL_n(021)$. The above also indicates that $\sma(w)<\lar(w)$. Now we assume that $w_i=\sma(w)$ for some $i$. Since $\RLAR(w)\ge \RSMA(w)$ and $\sma(w)\ne\lar(w)$, we must have $i<j$. However, the subsequence $w_iw_jw_k$ satisfies $\lar(w)=w_j>w_k>w_i=\sma(w)$ with $i<j<k$, and is therefore order isomorphic to $021$, thereby leading to a contradiction.
\end{proof}

\begin{lemma}\label{le:SL-func-eqn}
	We have
	\begin{align}\label{eq:SL-func-eqn}
		\cSL(x,u)=\frac{x}{(1-u)(1-x+xt)}+\frac{xt}{(1-u)(1-x+xt)} \cN(x,u).
	\end{align}
\end{lemma}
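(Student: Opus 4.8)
The plan is to derive a linear functional equation for $\cSL(x,u)$ by peeling off the last entry of each sequence in $\bSL_n(021)$. By Lemma \ref{le:SL(021)}, every $w\in\bSL_n(021)$ has its maximum at the final position, say $w_n=\ell=\lar(w)$. First I would record the converse: if a $021$-avoiding sequence ends in its largest entry, then $\RLAR=n\ge\RSMA$, so it lies in $\bSL$. Hence, among $021$-avoiding sequences, membership in $\bSL$ is exactly the condition ``the last entry equals the largest entry'', while membership in $\bLS$ is ``the last entry is strictly smaller than the largest entry''. This dichotomy will be the engine of the ascent bookkeeping.

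Next I would set up the map $w\mapsto(w',\ell)$ that deletes the last letter, where $w'=w_1\cdots w_{n-1}$ and $\ell=w_n$. For $n\ge2$ this is a bijection onto the set of pairs $(w',\ell)$ with $w'\in\bN_{n-1}(021)$ arbitrary and $\ell\ge\lar(w')$: deleting the last letter of an $\bSL$-sequence clearly leaves a $021$-avoiding sequence with all entries $\le\ell$, and conversely appending any $\ell\ge\lar(w')$ to a $021$-avoiding $w'$ cannot create a $021$ pattern (the new final letter dominates every earlier entry) and places the maximum at the end. Under this correspondence the length contributes a factor $x$, the new largest entry contributes $u^{\ell}$, and the ascent count changes by $\asc(w)=\asc(w')+[\,b<\ell\,]$, where $b$ denotes the last entry of $w'$. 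The length-one sequences are treated separately and contribute $\frac{x}{1-u}$.

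The crux is summing the weight $u^{\ell}t^{[\,b<\ell\,]}$ over all admissible $\ell\ge\lar(w')$. Writing $\ell'=\lar(w')$ (so $b\le\ell'$), I would use $[\,b<\ell\,]=1-[\,b=\ell\,]$ on the range $\ell\ge\ell'$, noting that $[\,b=\ell\,]$ can only fire in the boundary case $b=\ell'=\ell$; this gives
\begin{equation*}
	\sum_{\ell\ge\ell'}u^{\ell}t^{[\,b<\ell\,]}=\frac{t\,u^{\ell'}}{1-u}+(1-t)\,u^{\ell'}[\,b=\ell'\,].
\end{equation*}
By the dichotomy above, $[\,b=\ell'\,]$ is precisely the indicator that $w'\in\bSL$. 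Summing over all $w'\in\bN(021)$, the uniform part $\frac{t\,u^{\ell'}}{1-u}$ assembles into $\frac{xt}{1-u}\cN$ while the $\bSL$-supported correction $(1-t)u^{\ell'}$ assembles into $x(1-t)\cSL$, yielding
\begin{equation*}
	\cSL(x,u)=\frac{x}{1-u}+x(1-t)\,\cSL(x,u)+\frac{xt}{1-u}\,\cN(x,u).
\end{equation*}
Collecting the $\cSL$ terms as $(1-x+xt)\cSL=\frac{x}{1-u}\bigl(1+t\cN\bigr)$ then gives \eqref{eq:SL-func-eqn}.

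I expect the main obstacle to be this ascent bookkeeping: one must verify that appending the new maximum $\ell$ creates an ascent exactly when $b<\ell$, and that this interacts with $\lar(w')$ only through the single boundary case $b=\ell'=\ell$. Equivalently, one could split $\cN=\cSL+\cLS$ via \eqref{eq:N-SL-LS} and treat the two cases $b=\ell'$ (i.e.\ $w'\in\bSL$, inner sum $u^{\ell'}\tfrac{1-u(1-t)}{1-u}$) and $b<\ell'$ (i.e.\ $w'\in\bLS$, inner sum $\tfrac{t\,u^{\ell'}}{1-u}$) separately, then simplify using the identity $1-u(1-t)-t=(1-t)(1-u)$; both routes produce the same functional equation and hence \eqref{eq:SL-func-eqn}.
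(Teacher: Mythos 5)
Your proof is correct and takes essentially the same approach as the paper: peel off the final (maximal) entry using Lemma \ref{le:SL(021)}, distinguish whether the prefix lies in $\bSL$ or $\bLS$ (equivalently, whether its last entry equals its maximum) to track the ascent, and solve the resulting linear relation $\cSL=\frac{x}{1-u}+x(1-t)\,\cSL+\frac{xt}{1-u}\,\cN$. Your indicator-function bookkeeping and the explicit verification that appending any $\ell\ge\lar(w')$ lands back in $\bSL(021)$ are just a more compact packaging of the paper's case analysis.
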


\begin{proof}
	Assume that $n\ge 2$. Let $w=w_1\cdots w_n\in \bSL_n(021)$. By Lemma \ref{le:SL(021)}, we have $w_n=\lar(w)$. Also, we note that the subsequence $w'=w_1\cdots w_{n-1}$ is in $\bN_{n-1}(021)$. Further, $\lar(w')\le w_n$. Hence, any $w\in \bSL_n(021)$ can be uniquely generated by a $w'\in\bN_{n-1}(021)$ appended by an entry $\ell$ no smaller than $\ell':=\lar(w')$. To keep track of the ascent statistic, we need to distinguish $w'$ depending on whether it is in $\bLS_{n-1}(021)$ or $\bSL_{n-1}(021)$.
	
	If $w'\in \bLS_{n-1}(021)$, then the last entry of $w'$ is not the largest by the definition of $\bLS$, and thus it is smaller than $\ell'$. Therefore, $\asc(w)=\asc(w')+1$. If $w'\in \bSL_{n-1}(021)$, then the last entry of $w'$ is the largest by Lemma \ref{le:SL(021)}, and thus it equals $\ell'$. Therefore, if $\ell>\ell'$, we have $\asc(w)=\asc(w')+1$; if $\ell=\ell'$, we have $\asc(w)=\asc(w')$.
	
	Noting that sequences in $\bSL_1(021)$ are generated by
	\begin{align*}
		x\sum_{\ell\ge 0}u^\ell = \frac{x}{1-u},
	\end{align*}
	we have
	\begin{align*}
		\cSL(x,u)&=\frac{x}{1-u}+\sum_{\ell'\ge 0}\coLS_{[-,\ell']}(x)\sum_{\ell\ge \ell'}xu^{\ell}t + \sum_{\ell'\ge 0}\coSL_{[-,\ell']}(x)\left(xu^{\ell'}+\sum_{\ell> \ell'}xu^{\ell}t\right)\\
		&=\frac{x}{1-u}+ \sum_{\ell'\ge 0}\coSL_{[-,\ell']}(x)u^{\ell'}(x-xt) +\sum_{\ell'\ge 0}\left(\coLS_{[-,\ell']}(x)+\coSL_{[-,\ell']}(x)\right)\sum_{\ell\ge \ell'}xu^{\ell}t\\
		&=\frac{x}{1-u}+ (x-xt)\cSL(x,u)+\frac{xt}{1-u} \cN(x,u).
	\end{align*}
	This gives the desired relation.
\end{proof}

\begin{lemma}\label{le:LS-func-eqn}
	We have
	\begin{align}\label{eq:LS-func-eqn}
		\cLS(x,u)=\cN(x,u)\left(\cSL(x,u)-\frac{x}{1-x}\right).
	\end{align}
\end{lemma}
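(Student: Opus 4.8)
The plan is to prove \eqref{eq:LS-func-eqn} by a length-preserving bijection obtained by cutting a sequence at the rightmost occurrence of its largest entry. Given $w\in\bLS_n(021)$, I set $j:=\RLAR(w)$ and split $w=AB$ with $A:=w_1\cdots w_j$ and $B:=w_{j+1}\cdots w_n$. Since $w\in\bLS$ means $\RLAR(w)<\RSMA(w)\le n$, the suffix $B$ is nonempty. The first thing I would establish is the clean-cut inequality $\min(A)\ge\max(B)$: for indices $i<j<k$ the entry $w_j=\lar(w)$ is the maximum while $w_k<\lar(w)$ because $j$ is the \emph{rightmost} maximal position, so $w_i<w_k$ would make $w_iw_jw_k$ an occurrence of $021$; hence every entry of $A$ is at least every entry of $B$.

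Next I would normalize the prefix. As $A$ ends at its own maximum, $\RLAR(A)=|A|\ge\RSMA(A)$, so $A\in\bSL(021)$, and since no maximal entry of $w$ lies in $B$ we have $\max(A)=\lar(w)>\lar(B)=\max(B)$. Subtracting the constant $\lar(B)$ from every entry of $A$ yields $A'$; because subtracting a constant preserves relative order (hence the $021$-pattern, the positions of the extreme entries, and $\asc$), I get $A'\in\bSL(021)$ with $\asc(A')=\asc(A)$. The inequality $\min(A)\ge\lar(B)$ keeps $A'$ nonnegative, while $\max(A)>\lar(B)$ forces $\lar(A')\ge1$, so $A'$ is not the all-zero sequence. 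As the $u^0$-part of $\cSL(x,u)$ is exactly $\tfrac{x}{1-x}$ (generated by the all-zero sequences), such $A'$ are enumerated by $\cSL(x,u)-\tfrac{x}{1-x}$, while $B$ ranges over all of $\bN(021)$, enumerated by $\cN(x,u)$.

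I would then check that the statistics factor as the product demands. Clearly $|w|=|A'|+|B|$ and $\lar(w)=\max(A)=\lar(A')+\lar(B)$; moreover the cut lies immediately after the global maximum, so $w_j>w_{j+1}$ is a descent and $\asc(w)=\asc(A)+\asc(B)=\asc(A')+\asc(B)$. Thus $w$ contributes precisely $x^{|A'|+|B|}u^{\lar(A')+\lar(B)}t^{\asc(A')+\asc(B)}$, matching a term of $\cN(x,u)\big(\cSL(x,u)-\tfrac{x}{1-x}\big)$.

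Finally I would verify that $w\mapsto(A',B)$ is a bijection by exhibiting its inverse: from $A'\in\bSL(021)$ nonzero and $B\in\bN(021)$ set $A:=A'+\lar(B)$ and $w:=AB$. Lemma~\ref{le:SL(021)} places the maximum of $A'$, hence of $A$, at the last position of $A$, and $\lar(A')\ge1$ gives $\max(A)=\lar(A')+\lar(B)>\lar(B)=\max(B)$, so the global maximum of $w$ sits at position $|A|$, recovering $\RLAR(w)=|A|$; the dominance $\min(A)\ge\lar(B)\ge\sma(B)$ places the rightmost minimum in the nonempty suffix $B$, giving $\RSMA(w)>\RLAR(w)$ and hence $w\in\bLS$. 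The one genuinely delicate point, which I regard as the main obstacle, is confirming that no occurrence of $021$ straddles the cut. Because $B$ is a suffix and every entry of $A$ is at least every entry of $B$, an occurrence with indices $\alpha<\beta<\gamma$ cannot straddle: if $\alpha\in B$ then $\beta,\gamma\in B$ too, while if $\alpha\in A$ then $w_\alpha\ge\max(B)$ forbids $w_\alpha<w_\gamma$ unless $\gamma\in A$, which forces $\beta,\gamma\in A$ as well. Hence every occurrence is one-sided and excluded by $A,B\in\bN(021)$, so $w\in\bLS_n(021)$ and the two maps are mutually inverse, proving \eqref{eq:LS-func-eqn}.
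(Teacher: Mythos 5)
Your proof is correct and takes essentially the same route as the paper: cut $w$ at $\RLAR(w)$, note the prefix lies in $\bSL(021)$ and dominates the suffix (a consequence of $021$-avoidance), normalize the prefix by subtracting $\lar$ of the suffix so that it becomes a nonzero sequence counted by $\cSL(x,u)-\frac{x}{1-x}$, and multiply generating functions using the additivity of length, $\lar$, and $\asc$ across the cut. Your write-up is merely more explicit than the paper's in verifying the inverse map and that no occurrence of $021$ can straddle the cut, points the paper leaves as routine.
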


\begin{proof}
	Let $w=w_1\cdots w_n\in \bLS_n(021)$. Here $n\ge 2$ by the definition of $\bLS$. We first write $\ell=\lar(w)$ and assume that $j=\RLAR(w)$. Since $\RLAR(w)<\RSMA(w)$, we have $j<n$. Also, for any $k$ with $j+1\le k\le n$, we have
	\begin{align}\label{eq:LS-cond-1}
		w_k<w_j=\ell.
	\end{align}
	
	Now, we split $w$ into two subsequences: $w'=w_1\cdots w_j$ and $w''=w_{j+1}\cdots w_n$. Then both $w'$ and $w''$ are non-empty and avoid the pattern $021$. We also note that the last entry in $w'$ is also the largest, and thus $w'\in\bSL(021)$.
	
	Let $s'=\sma(w')$ and assume that $w_i=s'$ for some $i$ with $1\le i\le j$. Then for any $k$ with $j+1\le k\le n$, we have
	\begin{align}\label{eq:LS-cond-2}
		w_k\le s'.
	\end{align}
	Otherwise, if $w_k>s'$, then $i<j$ and the subsequence $w_iw_jw_k$ satisfies $\ell=w_j>w_k>w_i=s'$, and is therefore order isomorphic to $021$, which is prohibited.
	
	By \eqref{eq:LS-cond-1} and \eqref{eq:LS-cond-2}, we know that $\lar(w')>\lar(w'')$ and $\sma(w')\ge \lar(w'')$. For any sequence in $\bSL(021)$ with $\lar\ge 1$ (i.e., aside from those sequences consitsted of purely $0$'s), we add $\lar(w'')$ to each entry of this sequence. Then the above $w'$ is uniquely generated. This process also preserves the ascent statistic. If we write $\lar(w'')=\ell''$, then these $w'$ are generated by
	\begin{align*}
		\sum_{\ell'\ge 1}\coSL_{[-,\ell']}(x) u^{\ell'+\ell''}&=u^{\ell''}\sum_{\ell'\ge 1}\coSL_{[-,\ell']}(x) u^{\ell'}\\
		&=u^{\ell''}\left(\cSL(x,u)-\sum_{n\ge 1}x^nu^0t^0\right)\\
		&=u^{\ell''}\left(\cSL(x,u)-\frac{x}{1-x}\right).
	\end{align*}
	
	Therefore, noting that $\asc(w)=\asc(w')+\asc(w'')$, we have
	\begin{align*}
		\cLS(x,u)&=\sum_{\ell''\ge 0}\coN_{[-,\ell'']}(x)u^{\ell''}\left(\cSL(x,u)-\frac{x}{1-x}\right)\\
		&=\cN(x,u)\left(\cSL(x,u)-\frac{x}{1-x}\right),
	\end{align*}
	which is our required result.
\end{proof}

\begin{theorem}
	We have
	\begin{align}\label{eq:N}
		\cN(x,u)=\frac{1-u-2x+ux(1-t)+x^2(1+t)}{2x(1-x)t}-\frac{\sqrt{\Delta}}{2x(1-x)t},
	\end{align}
	where
	\begin{align}
		\Delta=\big(1-u-2x+ux(1-t)+x^2(1+t)\big)^2-4(1-x)^2 x^2 t.
	\end{align}
\end{theorem}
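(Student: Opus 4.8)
The plan is to eliminate $\cSL$ and $\cLS$ from the three relations already in hand, namely \eqref{eq:N-SL-LS} together with Lemmas~\ref{le:SL-func-eqn} and \ref{le:LS-func-eqn}, so as to isolate a single equation for $\cN=\cN(x,u)$. Starting from $\cN=\cSL+\cLS$ and substituting \eqref{eq:LS-func-eqn} gives
\begin{align*}
	\cN=\cSL+\cN\left(\cSL-\frac{x}{1-x}\right),
\end{align*}
which rearranges to $\cN\left(1+\frac{x}{1-x}-\cSL\right)=\cSL$. I would then insert the formula for $\cSL$ from \eqref{eq:SL-func-eqn}, which is affine in $\cN$. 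Because $\cLS$ supplies the product $\cN\cdot\cSL$, this substitution turns the relation into a genuine quadratic in $\cN$.

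The next step is to clear denominators. Multiplying through by $(1-u)(1-x+xt)(1-x)$ and collecting, I expect the terms to organize into
\begin{align*}
	xt(1-x)\,\cN^2-\big(1-u-2x+ux(1-t)+x^2(1+t)\big)\,\cN+x(1-x)=0.
\end{align*}
The only genuine computation here is checking that the coefficient of $\cN$ collapses to the displayed polynomial: writing the affine coefficients of $\cSL$ as $A=\tfrac{x}{(1-u)(1-x+xt)}$ and $B=\tfrac{xt}{(1-u)(1-x+xt)}$, one expands $(1-u)(1-x+xt)$, combines it with the $-x(1+t)(1-x)$ contribution, and groups the results as $ux(1-t)$ and $x^2(1+t)$. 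Denoting the middle coefficient by $P$, the quadratic formula then yields
\begin{align*}
	\cN=\frac{P\pm\sqrt{P^2-4(1-x)^2x^2t}}{2x(1-x)t},
\end{align*}
whose radicand is precisely $\Delta$.

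Finally, I must fix the branch. Since $\cN$ is a formal power series in $x$ with $\cN(x,u)=\frac{x}{1-u}+O(x^2)$ (the length-one sequences already account for the term $\tfrac{x}{1-u}$, and there are no ascents to record), the correct root is the one vanishing at $x=0$. Treating $P=(1-u)+(-2+u(1-t))x+(1+t)x^2$ exactly and expanding $\sqrt{\Delta}$ to second order, one finds $P-\sqrt{\Delta}=\frac{2t}{1-u}x^2+O(x^3)$, whereas $P+\sqrt{\Delta}\to 2(1-u)$ as $x\to0$. Dividing by $2x(1-x)t$ then shows that only the minus sign produces a power series, and that it reproduces the leading coefficient $\tfrac{1}{1-u}$; this pins down \eqref{eq:N}.

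The main obstacle is bookkeeping rather than conceptual: assembling the quadratic cleanly from the two lemmas and confirming that the unwieldy middle coefficient simplifies exactly to $1-u-2x+ux(1-t)+x^2(1+t)$. Once that identity is verified, the branch selection is a short initial-condition check.
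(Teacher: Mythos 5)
Your proposal is correct and follows essentially the same route as the paper: both combine \eqref{eq:N-SL-LS}, \eqref{eq:SL-func-eqn}, and \eqref{eq:LS-func-eqn} to eliminate $\cSL$ and $\cLS$, arrive at the identical quadratic $xt(1-x)\cN^2-\big(1-u-2x+ux(1-t)+x^2(1+t)\big)\cN+x(1-x)=0$, and select the root that is a genuine power series. The only cosmetic difference is the branch-selection criterion (you expand in $x$ and match the leading term $\tfrac{x}{1-u}$, while the paper invokes the initial condition $\cN(x,0,t)=x/(1-x)$ at $u=0$), and your check is if anything more explicit.
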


\begin{proof}
	Joining \eqref{eq:N-SL-LS} with \eqref{eq:SL-func-eqn} gives
	\begin{equation*}
		\left\{
		\begin{aligned}
			\cSL&=\dfrac{xt}{(1-u)(1-x+xt)} \cN+\dfrac{x}{(1-u)(1-x+xt)},\\[12pt]
			\cLS&=\dfrac{1-u-x+ux-uxt}{(1-u)(1-x+xt)}\cN-\dfrac{x}{(1-u)(1-x+xt)}.
		\end{aligned}
		\right.
	\end{equation*}
	Substituting the above into \eqref{eq:LS-func-eqn} yields the following quadratic equation of $\cN$:
	\begin{align}\label{quadratic eq:N}
	tx(1-x)\cN^2-\big((t+1)x^2-(ut-u+2)x-u+1\big)\cN+x(1-x)=0.
	\end{align}
	Recalling that $\cN$ is a formal power series in $x$, $u$ and $t$, with initial condition $\cN(x,0,t)=x/(1-x)$, we only have one admissible solution of \eqref{quadratic eq:N}, as given in \eqref{eq:N}.
\end{proof}

\section{Hong and Li's conjecture with ascents}

We still start with $\bI_{n,p}$, the set of $(p,p+2,\ldots,p+n)$-inversion sequences. Let us split $\bI_{n,p}$ into two disjoint types:
\begin{align*}
	\bSL_{n,p}&:=\{e\in\bI_{n,p}:\RLAR(e)\ge \RSMA(e)\},\\
	\bLS_{n,p}&:=\{e\in\bI_{n,p}:\RLAR(e)< \RSMA(e)\}.
\end{align*}

Define quadvariate generating functions
\begin{align*}
	\cG(x,s,u)=\cG(x,s,u,t)&:=\sum_{p\ge 1}\sum_{n\ge 1}x^ns^p \sum_{e\in\bI_{n,p}(021)}u^{\lar(e)}t^{\asc(e)},\\
	\cG^*(x,s,u)=\cG^*(x,s,u,t)&:=\sum_{p\ge 1}\sum_{n\ge 1}x^ns^p \sum_{e\in\bSL_{n,p}(021)}u^{\lar(e)}t^{\asc(e)},\\
	\cG^{**}(x,s,u)=\cG^{**}(x,s,u,t)&:=\sum_{p\ge 1}\sum_{n\ge 1}x^ns^p \sum_{e\in\bLS_{n,p}(021)}u^{\lar(e)}t^{\asc(e)}.
\end{align*}
Note that
\begin{align}\label{eq:G-G*-G**}
	\cG(x,s,u)=\cG^*(x,s,u)+\cG^{**}(x,s,u).
\end{align}
We also write the coefficients as
\begin{align*}
	g^*_{[n,p,\ell]}=g^*_{[n,p,\ell]}(t)&:=[x^n s^p u^\ell]\cG^*(x,s,u),\\
	g^{**}_{[n,p,\ell]}=g^{**}_{[n,p,\ell]}(t)&:=[x^n s^p u^\ell]\cG^{**}(x,s,u),\\
	g^*_{[-,p,\ell]}(x)=g^*_{[-,p,\ell]}(x,t)&:=[s^p u^\ell]\cG^*(x,s,u),\\
	g^{**}_{[-,p,\ell]}(x)=g^{**}_{[-,p,\ell]}(x,t)&:=[s^p u^\ell]\cG^{**}(x,s,u).
\end{align*}

\begin{lemma}
	We have
	\begin{align}\label{eq:G*-func-eqn}
		\cG^*(x,s,u) &= \frac{xs}{(1-s)(1-us)(1-x+xt)}\notag\\
		&\quad+\frac{xt}{(1-u)(1-x+xt)}\big(\cG(x,s,u)-u\cG(ux,us,1)\big).
	\end{align}
\end{lemma}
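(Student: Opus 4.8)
The plan is to mirror the proof of Lemma~\ref{le:SL-func-eqn}, replacing the family $\bN_n(021)$ by the bounded family $\bI_{n,p}(021)$ and tracking the extra parameter $p$ with the variable $s$. First I would note that Lemma~\ref{le:SL(021)} carries over verbatim: since $\bSL_{n,p}(021)\subseteq\bSL_n(021)$, every $e\in\bSL_{n,p}(021)$ has its largest entry in the last slot, so $e_n=\lar(e)$ and $\RLAR(e)=n$. This lets me decompose any $e\in\bSL_{n,p}(021)$ with $n\ge 2$ as $e=e'\ell$, where $e'=e_1\cdots e_{n-1}$ is the prefix and $\ell=e_n$ is the appended maximum. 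Because the bound vector of $\bI_{n,p}$ restricts to that of $\bI_{n-1,p}$ on the first $n-1$ coordinates, the prefix $e'$ lies in $\bI_{n-1,p}(021)$; conversely, any $e'\in\bI_{n-1,p}(021)$ together with a choice of $\ell$ in the range $\lar(e')\le\ell\le p+n-1=s_n-1$ reconstructs a unique member of $\bSL_{n,p}(021)$. This is the restricted analogue of the append-the-maximum bijection used for $\cSL$.

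Next I would analyze the ascent statistic exactly as in Lemma~\ref{le:SL-func-eqn} by splitting the prefix by type. Writing $\ell'=\lar(e')$: if $e'\in\bLS_{n-1,p}(021)$ its last entry is strictly below $\ell'$, so any admissible $\ell$ creates a new ascent and $\asc(e)=\asc(e')+1$; if $e'\in\bSL_{n-1,p}(021)$ its last entry equals $\ell'$, so $\asc(e)=\asc(e')+1$ for $\ell>\ell'$ and $\asc(e)=\asc(e')$ for $\ell=\ell'$. Marking length by $x$, the new maximum by $u^\ell$, and these ascent contributions by $t$, and using the $n=1$ base case $x\sum_{p\ge 1}s^p\sum_{e_1=0}^{p-1}u^{e_1}=\frac{xs}{(1-s)(1-us)}$, I would assemble the functional equation. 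As before, the $\bSL$ dichotomy ``$\ell=\ell'$ versus $\ell>\ell'$'' produces a self-referential term $(x-xt)\cG^*$, while the common ascent contributions of both types combine (since $\cG^*+\cG^{**}=\cG$) into a single term built from $\cG$.

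The crux, and the genuinely new feature compared with Lemma~\ref{le:SL-func-eqn}, is the finite upper limit $s_n-1=p+n-1$ on $\ell$, which couples the summation range to both the length $n$ (marked by $x$) and the parameter $p$ (marked by $s$). Where the unbounded case used $\sum_{\ell\ge\ell'}u^\ell=\frac{u^{\ell'}}{1-u}$, here I would write $\sum_{\ell=\ell'}^{p+n-1}u^\ell=\frac{u^{\ell'}-u^{p+n}}{1-u}$ and split into a ``bulk'' part and a ``boundary'' part. The bulk part, from $u^{\ell'}$, reproduces the familiar $\frac{xt}{1-u}\cG(x,s,u)$ term. The boundary part, from $-u^{p+n}$, is where the argument substitution enters: since $x^nu^n=(ux)^n$ and $s^pu^p=(us)^p$, the factor $u^{p+n}$ merges the length and parameter markers and leaves no surviving power of $u$ to record $\lar$, so after reindexing $n\mapsto n-1$ it collapses to $-\frac{uxt}{1-u}\cG(ux,us,1)$. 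I expect this boundary bookkeeping to be the main obstacle, precisely because one must notice that the disappearance of the $\lar$-marker forces the evaluation at $1$ in the third slot. Collecting the bulk and boundary contributions yields the recursion
\begin{align*}
	\cG^*=\frac{xs}{(1-s)(1-us)}+(x-xt)\cG^*+\frac{xt}{1-u}\big(\cG(x,s,u)-u\cG(ux,us,1)\big),
\end{align*}
and solving for $\cG^*$ by dividing through by $1-x+xt$ gives the claimed identity.
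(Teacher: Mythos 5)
Your proposal is correct and follows essentially the same route as the paper: both decompose each $e\in\bSL_{n,p}(021)$ by stripping off the final (largest) entry, split the prefix by $\bSL$/$\bLS$ type to track ascents, and handle the finite range $\lar(e')\le\ell\le s_n-1$ via the geometric sum whose boundary term $-u^{p+n}$ merges the markers into $-u\cG(ux,us,1)$. Your bookkeeping of the base case $\frac{xs}{(1-s)(1-us)}$, the self-referential $(x-xt)\cG^*$ term, and the final division by $1-x+xt$ all match the paper's computation.
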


\begin{proof}
	We proceed in an analogous way to the proof of Lemma \ref{le:SL-func-eqn}. For $n\ge 1$, each sequence $e$ in $\bSL_{n+1,p}(021)$ is uniquely generated by appending to a sequence $e'$ in $\bI_{n,p}(021)$ by a number $\ell$ with $\ell'=:\lar(e')\le \ell\le n+p$. If $e'\in \bLS_{n,p}(021)$, then $\asc(e)=\asc(e')+1$. If $e'\in \bSL_{n,p}(021)$, we have two subcases: if $\ell>\ell'$, then $\asc(e)=\asc(e')+1$; if $\ell=\ell'$, then $\asc(e)=\asc(e')$.
	
	Noting that sequences in $\bSL_{1,p}(021)$ are generated by
	\begin{align*}
		x\sum_{\ell=0}^{p-1}u^\ell=\frac{x(1-u^p)}{1-u},
	\end{align*}
	we have
	\begin{align*}
		\cG^*(x,s,u)&=\sum_{p\ge 1}s^p\frac{x(1-u^p)}{1-u}+\sum_{p\ge 1}s^p\sum_{n\ge 1}x^n\sum_{\ell'\ge 0}g^{**}_{[n,p,\ell']}\sum_{\ell=\ell'}^{n+p}xu^{\ell}t\\
		&\quad+\sum_{p\ge 1}s^p\sum_{n\ge 1}x^n\sum_{\ell'\ge 0}g^{*}_{[n,p,\ell']}\left(xu^{\ell'}+\sum_{\ell=\ell'+1}^{n+p}xu^{\ell}t\right)\\
		&=\sum_{p\ge 1}s^p\frac{x(1-u^p)}{1-u}+\sum_{p\ge 1}s^p\sum_{n\ge 1}x^n\sum_{\ell'\ge 0}g^{*}_{[n,p,\ell']}u^{\ell'}(x-xt)\\
		&\quad+\sum_{p\ge 1}s^p\sum_{n\ge 1}x^n\sum_{\ell'\ge 0}\left(g^{*}_{[n,p,\ell']}+g^{**}_{[n,p,\ell']}\right)\sum_{\ell=\ell'}^{n+p}xu^{\ell}t\\
		&=\frac{x}{1-u}\left(\frac{s}{1-s}-\frac{us}{1-us}\right)+(x-xt)\cG^*(x,s,u)\\
		&\quad +\frac{xt}{1-u}\big(\cG(x,s,u)-u\cG(ux,us,1)\big).
	\end{align*}
	This gives the desired relation.
\end{proof}

\begin{lemma}
	We have
	\begin{align}\label{eq:G**-func-eqn}
		\cG^{**}(x,s,u) = \cN(x,us)\left(\cG^*(x,s,u)-\frac{x}{1-x}\frac{s}{1-s}\right).
	\end{align}
\end{lemma}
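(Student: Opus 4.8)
The plan is to mimic the proof of Lemma~\ref{le:LS-func-eqn}, now carrying along the extra bookkeeping variable $s$ that records the parameter $p$. Given $e=e_1\cdots e_n\in\bLS_{n,p}(021)$, set $\ell=\lar(e)$ and $j=\RLAR(e)$; since $\RLAR(e)<\RSMA(e)$ we have $j<n$, and $e_k<e_j=\ell$ for all $j<k\le n$. Split $e$ into $e'=e_1\cdots e_j$ and $e''=e_{j+1}\cdots e_n$. As in the general case, $e'$ ends in its largest entry, so $e'\in\bSL_{j,p}(021)$ by the analogue of Lemma~\ref{le:SL(021)} (whose proof uses only $021$-avoidance and the $\bSL$ condition, hence applies verbatim to the restricted setting); writing $s'=\sma(e')$, the same $021$-avoidance argument forces every entry of $e''$ to be $\le s'$, so $\lar(e'')\le s'$. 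Conversely, any such pair reassembles into an $\bLS_{n,p}(021)$ sequence: because $\lar(e'')\le\sma(e')$ we get $\lar(e)=\lar(e')$ attained inside $e'$ at position $j$, and $\sma(e)=\sma(e'')$ attained inside $e''$, whence $\RLAR(e)=j<\RSMA(e)$. Since $e_j>e_{j+1}$, the split creates no ascent at the junction, so $\asc(e)=\asc(e')+\asc(e'')$.

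The new feature, compared with Lemma~\ref{le:LS-func-eqn}, is how the variable $s$ distributes. Write $\ell''=\lar(e'')$; because $s'\le e_1\le p-1$ we have $\ell''\le p-1$. I would then reparametrize $e'$ by subtracting $\ell''$ from each of its entries to obtain $v:=e'-\ell''$. Subtracting a constant shifts the bounds, so $v\in\bSL_{j,\,p-\ell''}(021)$ with $\sma(v)\ge 0$ and $\lar(v)=\ell-\ell''\ge 1$ (this is exactly the deflation used earlier for $021$-avoiding sequences, and it preserves $\asc$). The crucial point is that $s^{p}=s^{p-\ell''}s^{\ell''}$, so the $s$-weight splits into an $s^{p-\ell''}$ attached to $v$ and an extra $s^{\ell''}$ attached to $e''$; likewise $u^{\ell}=u^{\lar(v)}u^{\ell''}$ hands an extra $u^{\ell''}$ to $e''$. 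Consequently $e''$ is weighted by $x^{\,n-j}(us)^{\lar(e'')}t^{\asc(e'')}$, and summing over all $021$-avoiding words $e''\in\bN_{n-j}(021)$ (with $\ell''$ free, no further constraint) produces exactly $\cN(x,us)$.

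It then remains to identify the generating function of the base factors $v$. These range over all of $\bSL_{j,p'}(021)$ (with $p'=p-\ell''\ge 1$) subject only to $\lar(v)\ge 1$; the excluded sequences are the all-zero ones, whose contribution is $\sum_{j,p'\ge1}x^{j}s^{p'}=\frac{x}{1-x}\frac{s}{1-s}$. Hence the $v$-factor contributes $\cG^*(x,s,u)-\frac{x}{1-x}\frac{s}{1-s}$, and assembling the two independent factors yields \eqref{eq:G**-func-eqn}. The one place this proof genuinely departs from Lemma~\ref{le:LS-func-eqn}, and the main thing to get right, is the double role played by $\ell''=\lar(e'')$: the constant by which $e'$ is deflated is recorded simultaneously in the $u$- and the $s$-gradings of $e''$, which is precisely what forces the argument $us$ in $\cN(x,us)$ rather than $\cN(x,u)$.
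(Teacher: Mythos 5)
Your proof is correct and follows essentially the same route as the paper's: the same split of $e$ at $\RLAR(e)$ into a prefix in $\bSL_{n',p}(021)$ and a free $021$-avoiding suffix, the same shift-by-$\lar(e'')$ bijection (you describe it as deflation of $e'$, the paper as inflation of a sequence in $\bSL_{n',p-\ell''}(021)$ with $\lar\ge 1$), and the same observation that the $u$- and $s$-gradings both transfer a factor $(us)^{\ell''}$ to the suffix, yielding $\cN(x,us)$ against $\cG^*(x,s,u)-\frac{x}{1-x}\frac{s}{1-s}$.
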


\begin{proof}
	We proceed in an analogous way to the proof of Lemma \ref{le:LS-func-eqn}. For $n\ge 2$, each sequence in $\bLS_{n,p}(021)$ is uniquely generated by appending to a sequence $e'$ in $\bSL_{n',p}(021)$ (with $1\le n'\le n-1$) by a sequence $e''$ in $\bN_{n-n'}(021)$ such that $\lar(e')>\lar(e'')$ and $\sma(e')\ge \lar(e'')$. Such $e'$ can be obtained by adding $\lar(e'')$ to each entry of a sequence in $\bSL_{n',p-\lar(e'')}(021)$ with $\lar\ge 1$. If we write $\lar(e'')=\ell''$, then these $e'$ are generated by
	\begin{align*}
		\sum_{p\ge \ell''+1}s^p\sum_{\ell'\ge 1}g^*_{[-,p-\ell'',\ell']}(x)u^{\ell'+\ell''}&=u^{\ell''}s^{\ell''}\sum_{p'\ge 1}\sum_{\ell'\ge 1}g^*_{[-,p',\ell']}(x)u^{\ell'}s^{p'}\\
		&=u^{\ell''}s^{\ell''}\left(\cG^*(x,s,u)-\frac{x}{1-x}\frac{s}{1-s}\right).
	\end{align*}
	
	Therefore, noting that $\asc(e)=\asc(e')+\asc(e'')$, we have
	\begin{align*}
		\cG^{**}(x,s,u)&=\sum_{\ell''\ge 0}\coN_{[-,\ell'']}(x)u^{\ell''}s^{\ell''}\left(\cG^*(x,s,u)-\frac{x}{1-x}\frac{s}{1-s}\right)\\
		&=\cN(x,us)\left(\cG^*(x,s,u)-\frac{x}{1-x}\frac{s}{1-s}\right),
	\end{align*}
	which is our required result.
\end{proof}

\begin{theorem}
	We have
	\begin{align}\label{eq:F}
		\cG(x,xt,1)=\frac{t\big((1+x-xt) r-x\big)}{(1-r)(1-xt)},
	\end{align}
	where
	\begin{align}\label{eq:x-expansion}
		r&=x + t x^2 + (t ^2+2 t) x^3 + (t ^3+8 t ^2+3 t) x^4\notag\\
		&\quad + (t ^4+22 t ^3+27 t ^2+4 t) x^5+\cdots
	\end{align}
	is the unique power series solution (with $r\to 0$ as $x\to 0$) to
	\begin{align}\label{eq:kernel-poly}
		r^3-(2+x+t-xt^2)r^2+(1+2x)r-x=0.
	\end{align}
\end{theorem}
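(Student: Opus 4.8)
The plan is to collapse the three relations \eqref{eq:G-G*-G**}, \eqref{eq:G*-func-eqn} and \eqref{eq:G**-func-eqn} into a single functional equation for $\cG(x,s,u)$ in which $u$ is a catalytic variable. Writing $\nu:=\cN(x,us)$ and abbreviating $\beta:=1-x+xt$, equation \eqref{eq:G**-func-eqn} reads $\cG^{**}=\nu\big(\cG^{*}-C\big)$ with $C:=\frac{x}{1-x}\frac{s}{1-s}$, so \eqref{eq:G-G*-G**} gives $\cG^{*}=\frac{\cG+\nu C}{1+\nu}$. Substituting this into \eqref{eq:G*-func-eqn} and clearing $1+\nu$ produces
\begin{align*}
\big(1-(1+\nu)B\big)\cG(x,s,u)=(1+\nu)A-\nu C-(1+\nu)Bu\,\cG(ux,us,1),
\end{align*}
where $A:=\frac{xs}{(1-s)(1-us)\beta}$ and $B:=\frac{xt}{(1-u)\beta}$. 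Since the target is $\cG(x,xt,1)$, I would then set $s=xt$; writing $\Phi(y):=\cG(y,yt,1)$, the self-reference becomes $\cG(ux,uxt,1)=\Phi(ux)$, leaving a divide-and-conquer equation that ties the bivariate unknown $\cG(x,xt,u)$ to $\Phi$ at the contracted argument $ux$.

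Next I would apply the kernel method. The kernel $1-(1+\nu)B$ vanishes exactly when $1+\nu=\frac{(1-u)\beta}{xt}$, i.e.\ $\nu=\frac{1-x-u\beta}{xt}$; feeding this into the quadratic \eqref{quadratic eq:N} for $\nu=\cN(x,us)$ at $s=xt$ determines the root $u=u_{0}(x)$. A pleasant surprise is that the constant term (in the unknown $u_{0}x$) of the resulting quadratic drops out, so it factors and the nonzero — hence admissible, with $u_{0}\to1$ as $x\to0$ — root is \emph{rational}:
\begin{align*}
u_{0}x=\frac{x\big((1-x)^2-xt\big)}{(1-x+xt)(1-x-xt)},\qquad \cN(x,u_{0}xt)=\frac{x}{1-x-xt}.
\end{align*}
On this root the left-hand side vanishes and $(1+\nu)B=1$, so the equation degenerates to $\Phi(u_{0}x)=\big((1+\nu)A-\nu C\big)/u_{0}$. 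Carrying out the (routine) simplification of the right-hand side at $u=u_{0}$, using $1+xt\nu=\frac{(1-x)(1-xt)}{1-x-xt}$, collapses everything to
\begin{align*}
\Phi\!\left(\frac{x\big((1-x)^2-xt\big)}{(1-x+xt)(1-x-xt)}\right)=\frac{x^{2}t\,(1-x+xt)}{1-(2+t)x+(1+2t)x^{2}-tx^{3}}.
\end{align*}

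This is a rational parametrization of $\Phi$ by the expansion variable $x$. To read off $\cG(x,xt,1)=\Phi(x)$ itself, I would introduce $r$ as the inverse parameter: rename the inner variable to $r$ and declare the argument $\frac{r((1-r)^2-rt)}{(1-r+rt)(1-r-rt)}$ to be the new $x$. Clearing denominators in this relation shows that $r$ satisfies precisely the cubic \eqref{eq:kernel-poly}, and the branch with $r\to0$ as $x\to0$ pins down the series \eqref{eq:x-expansion}. Finally, rewriting $\frac{r^{2}t\,(1-r+rt)}{1-(2+t)r+(1+2t)r^{2}-tr^{3}}$ modulo the cubic — via the factorizations $(1-r+rt)(1-r-rt)$ of the denominator of $x(r)$ together with $1-(2+t)r+(1+2t)r^{2}-tr^{3}=(1-r+rt)(1-r-rt)-tr\big(r^{2}-(2+t)r+1\big)$ — reduces it to $\frac{t\big((1+x-xt)r-x\big)}{(1-r)(1-xt)}$, which is \eqref{eq:F}.

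The main obstacle is the contraction $\cG(ux,us,1)=\Phi(ux)$: unlike a textbook kernel-method situation, the kernel root delivers $\Phi$ at the shifted point $u_{0}x$ rather than at $x$, so one cannot simply solve algebraically for $\Phi(x)$. The resolution — recognizing the output as a rational parametrization and inverting it, which is exactly what manufactures the cubic \eqref{eq:kernel-poly} out of a degree-three-over-degree-two rational map — is the conceptual crux. A secondary technical point is that the kernel carries the algebraic function $\cN(x,us)$, so the root must be extracted through the quadratic \eqref{quadratic eq:N}; the fortunate rational collapse of $u_{0}x$ and of $\cN(x,u_{0}xt)$ is what keeps the elimination tractable.
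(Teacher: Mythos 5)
Your proposal is correct, and at its core it is the same proof as the paper's: you combine \eqref{eq:G-G*-G**}, \eqref{eq:G*-func-eqn} and \eqref{eq:G**-func-eqn} at $s=xt$ into a single kernel equation, eliminate the algebraic function $\cN$ through the quadratic \eqref{quadratic eq:N}, and arrive at the same cubic \eqref{eq:kernel-poly} and the same closed form. The one genuine difference is the direction in which the kernel is solved, and hence how the shifted-argument difficulty is handled. The paper first changes variables $u\mapsto w/x$, so that the troublesome term $\cG(ux,uxt,1)$ becomes $\cF(w,1)$, evaluated at the free variable $w$; the kernel is then solved for $x$ as an algebraic function of $w$ (the root of the cubic \eqref{eq:ker-poly-2}), the ratio $C_0/C_1$ gives $\cF(w,1)$ at once, and the concluding renaming $(w,x)\mapsto(x,r)$ is precisely your ``inversion of the rational parametrization'' in disguise. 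You instead solve the kernel for $u$ with $x$ free, which is why you see a rational root: I checked that substituting the kernel relation $\nu=\frac{1-x-u\beta}{xt}$ into \eqref{quadratic eq:N} does yield a quadratic in $w=ux$ whose constant term vanishes, with nonzero root $w_0=\frac{x((1-x)^2-xt)}{(1-x+xt)(1-x-xt)}$, and that the resulting value $\cN(x,w_0t)=\frac{x}{1-x-xt}$ is the admissible branch (it is $O(x)$, while the conjugate root of the quadratic has a pole at $x=0$); the price is that the kernel delivers $\Phi$ at the contracted point $w_0(x)$, so you must invert the rational map, which is exactly what regenerates the cubic. Your two key evaluations also check out: $\Phi(w_0(x))=\frac{tx^2(1-x+xt)}{(1-x)^2(1-xt)}$ (your denominator $1-(2+t)x+(1+2t)x^2-tx^3$ is this factored form), and the reparametrized expression does reduce to \eqref{eq:F} modulo the cubic. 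In short: the paper's transposition of variables avoids ever meeting the obstacle you call the conceptual crux, at the cost of working with an algebraic kernel root; your route keeps the kernel root rational and makes the origin of the cubic transparent (it is literally the denominator-cleared form of $x=w_0(r)$), at the cost of the extra inversion step. Both arguments are rigorous at the same level, modulo the standard formal-power-series justifications of kernel substitutions that the paper likewise leaves implicit.
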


\begin{proof}
	For convenience, we define
	\begin{align*}
		\cF(x,u)&:=\cG(x,xt,u),\\
		\cF^*(x,u)&:=\cG^*(x,xt,u),\\
		\cF^{**}(x,u)&:=\cG^{**}(x,xt,u).
	\end{align*}
	Combining \eqref{eq:G-G*-G**} and \eqref{eq:G*-func-eqn} with $s\mapsto xt$ gives
	\begin{align*}
		\left\{\begin{aligned}
			\cF^*(x,u)&=\dfrac{x^2t}{(1-xt)(1-uxt)(1-x+xt)}+\dfrac{xt}{(1-u)(1-x+xt)}\cF(x,u)\\
			&\quad-\dfrac{uxt}{(1-u)(1-x+xt)}\cF(ux,1),\\[12pt]
			\cF^{**}(x,u)&=-\dfrac{x^2t}{(1-xt)(1-uxt)(1-x+xt)}+\dfrac{1-u-x+ux-uxt}{(1-u)(1-x+xt)}\cF(x,u)\\
			&\quad+\dfrac{uxt}{(1-u)(1-x+xt)}\cF(ux,1).\\
		\end{aligned}\right.
	\end{align*}
	Now, it follows by substituting the above into \eqref{eq:G**-func-eqn} with $s\mapsto xt$ that
	\begin{align*}
		&\left(\dfrac{1-u-x+ux-uxt}{(1-u)(1-x+xt)}-\dfrac{xt}{(1-u)(1-x+xt)}\cN(x,uxt)\right)\cF(x,u)\\
		&\qquad=\dfrac{x^2t}{(1-xt)(1-uxt)(1-x+xt)}\left(1-\frac{xt(1-u+ux-uxt)}{1-x}\cN(x,uxt)\right)\\
		&\qquad\quad-\left(\dfrac{uxt}{(1-u)(1-x+xt)}+\dfrac{uxt}{(1-u)(1-x+xt)}\cN(x,uxt)\right)\cF(ux,1).
	\end{align*}
	We then make the change of variables $u\mapsto w/x$. Therefore,
	\begin{align}\label{eq:F-kernel}
		K\cdot \cF(x,x^{-1}w)=C_0-C_1\cdot \cF(w,1),
	\end{align}
	where
	\begin{align*}
		K&=\dfrac{1-x^{-1}w-x+w-wt}{(1-x^{-1}w)(1-x+xt)}-\dfrac{xt}{(1-x^{-1}w)(1-x+xt)}\cN(x,wt),\\
		C_0&=\dfrac{x^2t}{(1-xt)(1-wt)(1-x+xt)}\left(1-\frac{xt(1-x^{-1}w+w-wt)}{1-x}\cN(x,wt)\right),\\
		C_1&=\dfrac{wt}{(1-x^{-1}w)(1-x+xt)}+\dfrac{wt}{(1-x^{-1}w)(1-x+xt)}\cN(x,wt).
	\end{align*}
	Also, we recall from \eqref{eq:N},
	\begin{align}\label{eq:N-w}
		\cN(x,wt)=\frac{1-wt-2x+wxt(1-t)+x^2(1+t)}{2x(1-x)t}-\frac{\sqrt{\Delta^*}}{2x(1-x)t},
	\end{align}
	where
	\begin{align*}
		\Delta^*=\big(1-wt-2x+wxt(1-t)+x^2(1+t)\big)^2-4(1-x)^2 x^2 t.
	\end{align*}
	Applying the kernel method to \eqref{eq:F-kernel} by setting the kernel polynomial $K$ to be zero, we have
	\begin{align*}
		\dfrac{1-x^{-1}w-x+w-wt}{(1-x^{-1}w)(1-x+xt)}-\dfrac{xt}{(1-x^{-1}w)(1-x+xt)}\cN(x,wt)=0,
	\end{align*}
	or,
	\begin{align}\label{eq:ker-poly-1}
		\cN(x,wt) = \frac{1-x^{-1}w-x+w-wt}{xt},
	\end{align}
	or by recalling \eqref{eq:N-w},
	\begin{align}\label{eq:ker-poly-2}
		x^3-(2+w+t-wt^2)x^2+(1+2w)x-w=0.
	\end{align}
	Our admissible solution $x=x(w)$ satisfies $x\to 0$ as $w\to 0$, and has the power series expansion
	\begin{align*}
		x&=w + t w^2 + (t ^2+2 t) w^3 + (t ^3+8 t ^2+3 t) w^4\\
		&\quad + (t ^4+22 t ^3+27 t ^2+4 t) w^5+\cdots.
	\end{align*}
	On the other hand,
	\begin{align*}
		\cF(w,1)&=\frac{C_0}{C_1}\\
		\text{\tiny (by \eqref{eq:ker-poly-1})}&=-\frac{t \big(x^3+(wt-w-2)x^2+w x\big)}{(1-x)(1-xt)(1-wt)}\\
		\text{\tiny (by \eqref{eq:ker-poly-2})} &= \frac{t\big((1+w-wt) x-w\big)}{(1-x)(1-wt)}.
	\end{align*}
	Finally, the desired result follows by renaming the variables $(w,x)\mapsto (x,r)$.
\end{proof}

Let
\begin{align}
	\cI(x,t):=\sum_{n\ge 1}x^n\sum_{e\in\bI_n(0021)}t^{\asc(e)}.
\end{align}
We are in a position to establish the following ``explicit'' expression for $\cI(x,t)$.

\begin{theorem}
	We have
	\begin{align}\label{eq:A-r}
		\cI(x,t) = \frac{r}{1-r},
	\end{align}
	where $r$ is as in \eqref{eq:x-expansion}.
\end{theorem}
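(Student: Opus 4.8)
The plan is to reduce the claim to the already-proved formula \eqref{eq:F} for $\cG(x,xt,1)$ by running an $\asc$-refined version of the decomposition used in the First Proof of Hong and Li's Conjecture. The heart of the matter is to track $\asc$ through Lemma \ref{lem:0021}.

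First I would re-examine that lemma keeping the ascent statistic in view. A sequence $e\in\bI_{n+p}(0021)$ with $\iar(e)=p$ has the shape $(0,1,\ldots,p-1,e_{p+1},\ldots,e_{p+n})$ with $e_{p+1}\le p-1$. The initial block $0,1,\ldots,p-1$ contributes exactly $p-1$ ascents; the junction from $e_p=p-1$ down to $e_{p+1}\le p-1$ is \emph{not} an ascent; and all remaining ascents are precisely those of the image $\hat e\in\bI_{n,p}(021)$. Hence $\asc(e)=(p-1)+\asc(\hat e)$, so the sequences with $\iar(e)=p$ and $n\ge 1$ contribute $t^{p-1}$ times the $\asc$-enumerator of $\bI_{n,p}(021)$. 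Separating the all-tight sequences $(0,1,\ldots,N-1)$, which carry $N-1$ ascents and thus generate $\sum_{N\ge 1}x^N t^{N-1}=\frac{x}{1-xt}$, I would assemble
$$\cI(x,t)=\frac{x}{1-xt}+\frac1t\sum_{p,n\ge 1}x^n(xt)^p\sum_{\hat e\in\bI_{n,p}(021)}t^{\asc(\hat e)}=\frac{x}{1-xt}+\frac1t\,\cG(x,xt,1),$$
where the last step is just the definition of $\cG$ with $s\mapsto xt$ and $u\mapsto 1$. Setting $t=1$ recovers $\cI(x)=\frac{x}{1-x}+\cH(x,x,1)$ from the earlier proof, a reassuring consistency check.

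Then I would substitute \eqref{eq:F}, namely $\cG(x,xt,1)=\frac{t\big((1+x-xt)r-x\big)}{(1-r)(1-xt)}$, into this identity. Putting everything over the common denominator $(1-r)(1-xt)$, the numerator collapses as
$$x(1-r)+\big((1+x-xt)r-x\big)=r-xtr=r(1-xt),$$
so the factor $(1-xt)$ cancels and we are left with $\cI(x,t)=\frac{r}{1-r}$, exactly as claimed.

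The only delicate point is the ascent bookkeeping at the junction of the two blocks: one must verify that the step from $e_p=p-1$ to $e_{p+1}\le p-1$ never creates an ascent, since this is what yields the clean factor $t^{p-1}$ and hence the correct specialization $s\mapsto xt$. Once that is confirmed, everything downstream is a routine rational-function simplification.
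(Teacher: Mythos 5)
Your proposal is correct and takes essentially the same approach as the paper: both track $\asc$ through Lemma~\ref{lem:0021} (via $\asc(e)=(p-1)+\asc(\hat e)$ and the all-tight series $\frac{x}{1-xt}$) to obtain $\cI(x,t)=\frac{x}{1-xt}+t^{-1}\cG(x,xt,1)$, and then substitute \eqref{eq:F}. The only difference is cosmetic: you write out the final rational-function cancellation explicitly, which the paper leaves implicit in ``the desired result follows by recalling \eqref{eq:F}.''
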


\begin{proof}
	Recall that any inversion sequence $e$ in $\bI_n(0021)$ is either in correspondence with a sequence $\hat{e}$ in $\bI_{n-p,p}(021)$ for some $p$ by Lemma \ref{lem:0021}, in which case $\asc(e)=(p-1)+\asc(\hat{e})$, or a sequence with only tight entries, which is generated by
	$$x+t x^2+t^2 x^3+\cdots = \frac{x}{1-xt}.$$
	Hence,
	\begin{align*}
		\cI(x,t)&=\frac{x}{1-xt}+\sum_{p\ge 1}\sum_{n\ge 1}x^{n+p}t^{p-1} \sum_{e\in\bI_{n,p}(021)}t^{\asc(e)}\\
		&=\frac{x}{1-xt}+t^{-1}\cG(x,xt,1).
	\end{align*}
	The desired result follows by recalling \eqref{eq:F}.
\end{proof}

Now, we conclude our proof of \eqref{eq:I-gf}.

\begin{proof}[Proof of \eqref{eq:I-gf}]
	By \eqref{eq:A-r}, we have
	\begin{align*}
		r=\frac{\cI(x,t)}{\cI(x,t)+1}.
	\end{align*}
	Substituting the above into \eqref{eq:kernel-poly} gives
	\begin{align*}
		0&=\left(\frac{\cI(x,t)}{\cI(x,t)+1}\right)^3-(2+x+t-xt^2)\left(\frac{\cI(x,t)}{\cI(x,t)+1}\right)^2\\
		&\quad+(1+2x)\left(\frac{\cI(x,t)}{\cI(x,t)+1}\right)-x,
	\end{align*}
	which finally results in
	\begin{align*}
		\cI(x,t)=\big(1+\cI(x,t)\big)\big(x+t \cI(x,t)^2 - xt^2 \cI(x,t)^2\big).
	\end{align*}
	This functional equation is identical to \eqref{eq:A(x,t)-eqn}.
\end{proof}

\section{Conclusion}

Burstein's second and third conjectures remain open. It seems that the $\alt$ statistic may still be a key as one can easily modify the proof of Lemma \ref{lem:alt-bound} to obtain the following analogous result.

\begin{lemma}
	For each permutation $\pi$ in
	$$\fS_n(2134, 42153, 24153)\quad\text{or}\quad\fS_n(2143, 42135, 24135),$$
	we have $\alt(\pi)\le 4$.
\end{lemma}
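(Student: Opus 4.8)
The plan is to follow the proof of Lemma~\ref{lem:alt-bound} line for line, again forbidding the subword $\mathrm{L}\mathrm{R}\mathrm{L}\mathrm{R}$ in $w_\pi$ (which already forces $\alt(\pi)\le 4$) and extracting the same four witnesses. Concretely, for $\pi$ in either set with $\pi_k=1$, I would assume for contradiction that $\mathrm{L}\mathrm{R}\mathrm{L}\mathrm{R}$ is a subword of $w_\pi$; the purely combinatorial extraction from that proof then yields indices $a,b<k<c,d$ with $\pi_a<\pi_c<\pi_b<\pi_d$, where $a,b$ are positions to the left of $1$ and $c,d$ positions to its right. The whole argument reduces to producing, from these four entries together with $\pi_k=1$, an occurrence of one of the three forbidden patterns. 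The one genuinely new point is that the length-four pattern now uses the entry $\pi_a$ (rather than $\pi_b$, as $3124$ did) to pin down the relative order of $c$ and $d$.

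For $\fS_n(2134,42153,24153)$ the modification is cosmetic. If $c<d$, then $\pi_a,\pi_k,\pi_c,\pi_d$ sit at positions $a<k<c<d$ with values in relative order $2,1,3,4$, i.e.~an occurrence of $2134$; hence $d<c$. Since the two length-five patterns are identical to those in $I$, the remainder is verbatim: with $d<c$, the entries $\pi_a,\pi_b,\pi_k,\pi_d,\pi_c$ realize $24153$ when $a<b$, and $\pi_b,\pi_a,\pi_k,\pi_d,\pi_c$ realize $42153$ when $a>b$, contradicting the avoidance in both cases.

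For $\fS_n(2143,42135,24135)$ the same scheme runs with the orientation reversed. Here $2143$ forces $c<d$ instead of $d<c$: if $d<c$, then $\pi_a,\pi_k,\pi_d,\pi_c$ (positions $a<k<d<c$) has relative order $2,1,4,3$, an occurrence of $2143$. This reversal is exactly what the length-five patterns demand, since $42135$ and $24135$ end in the increasing pair $\ldots 35$ (with $\pi_c$ preceding $\pi_d$) rather than $\ldots 53$. Thus, once $c<d$ is in force, $\pi_a,\pi_b,\pi_k,\pi_c,\pi_d$ realizes $24135$ when $a<b$, and $\pi_b,\pi_a,\pi_k,\pi_c,\pi_d$ realizes $42135$ when $a>b$.

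The step I would scrutinize most is the compatibility between the length-four and the two length-five patterns inside each set: the length-four pattern alone decides whether $c<d$ or $d<c$, and this must match the orientation encoded in the tails of the length-five patterns. For the first set the decreasing tail $\ldots 53$ pairs with $d<c$ as forced by $2134$, and for the second set the increasing tail $\ldots 35$ pairs with $c<d$ as forced by $2143$; it is precisely this coincidence that makes the modification routine. Everything else is a rank computation on four or five entries, identical in spirit to Lemma~\ref{lem:alt-bound}, and forbidding $\mathrm{L}\mathrm{R}\mathrm{L}\mathrm{R}$ then delivers $\alt(\pi)\le 4$.
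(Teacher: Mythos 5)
Your proof is correct and coincides with the paper's intended argument: the paper gives no separate proof of this lemma, asserting only that it follows by easily modifying the proof of Lemma \ref{lem:alt-bound}, and your write-up is precisely that modification, carried out accurately. In particular, your rank computations check out — the pattern $2134$ (resp.\ $2143$) applied to $\pi_a$, $\pi_k=1$, and the two right-side entries forces $d<c$ (resp.\ $c<d$), and this orientation is exactly the one required by the tails of the corresponding length-five patterns $42153,24153$ (resp.\ $42135,24135$).
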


However, in these two cases, there is an obstacle to deducing parallel relations to those in \eqref{id:LandR}, especially for $k=2$ and $3$.

Finally, for $\pi$ a permutation of length $n$, we define
\begin{itemize}[leftmargin=*,align=left]
	\renewcommand{\labelitemi}{$\triangleright$}
	\item \textit{the number of \textbf{i}nverse \textbf{asc}ents}: $\iasc(\pi):=\asc(\pi^{-1})$ where $\pi^{-1}$ is the inverse permutation of $\pi$;
	
	\item \textit{the number of \textbf{r}ight-to-left \textbf{ma}xima}: $\rma(\pi):=|\{i\in[n]: \pi_j<\pi_i,\, \forall j>i\}|$; 
	
	\item \textit{the set of positions of \textbf{l}eft-to-right \textbf{mi}nima}: $\LMI(\pi):=\{i\in[n]: \pi_j>\pi_i,\, \forall j<i\}$.
\end{itemize}

Our numerical evidence supports the following equidistribution conjectures.

\begin{conjecture}
	The triple of statistics $(\LMI,\rma,\ides)$ has the same distribution over $\fS_n(2134,42153,24153)$ and $\fS_n(3124,42153,24153)$. 
\end{conjecture}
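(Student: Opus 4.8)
The plan is to prove the conjecture bijectively, by constructing a bijection $\Phi:\fS_n(2134,42153,24153)\to\fS_n(3124,42153,24153)$ that preserves the entire triple $(\LMI,\rma,\ides)$. A purely generating-function attack looks unattractive here precisely because $\LMI$ is set-valued: encoding it faithfully would require an unbounded family of catalytic variables, so a structural/bijective argument seems the right vehicle, with the functional-equation method of Section~2 kept only as a consistency check at the level of the $\ides$-marginal. The natural starting point is that both classes share the same coarse scaffold: by the analog of Lemma~\ref{lem:alt-bound} recorded just above the conjecture, every permutation in either class satisfies $\alt(\pi)\le 4$, so both decompose along the position of the entry $1$ and the associated $\mathrm{L}/\mathrm{R}$ word exactly as in Section~2.

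First I would refine the decomposition of Section~2 so that it tracks $(\LMI,\rma,\ides)$ jointly rather than $\ides$ alone. The observation that makes this feasible is that $\LMI$ behaves transparently under the concatenation maps behind \eqref{id:F^R} and \eqref{id:LandR}: since the global minimum $1$ always lies in the front block $\sigma$ and every entry appended afterwards exceeds the running minimum, one has $\LMI(\phi(\sigma,\mu))=\LMI(\sigma)$, so the set-valued statistic is inherited from a single block; appending $1$ at the end (as in \eqref{F1}) merely adjoins the terminal position. The statistics $\ides$ and $\rma$ then add across blocks with the same controlled $+1$ corrections already isolated in Lemmas~\ref{lem:LandR} and~\ref{lem:F123}. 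Carrying this out identifies the ``side blocks'' of the decomposition: in the $3124$-world they are governed by $\fS_n(312)$ (the series $\cC(x,t)$ of Lemma~\ref{lem:F123}), whereas in the $2134$-world the parallel argument forces the side blocks into $\fS_n(213)$. I would therefore reduce the whole problem to a triple-preserving bijection $\fS_n(312)\to\fS_n(213)$ at the level of these side blocks; this is a tractable sub-lemma, and at least its $\ides$-shadow is automatic, since $\ides$ is Narayana-distributed over both classes (using $312^{-1}=231$, $213^{-1}=213$, and the fact that $\des$ is Narayana over every length-three class).

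The main obstacle is exactly the one flagged after the lemma above: the clean product relations \eqref{id:LandR} have no direct analog for $k=2$ and $k=3$ in the $2134$-world. Concretely, the concatenation map realizing \eqref{id:LandR} ceases to be a bijection onto $\fS_{n,k+1}^{R}$ there, because the region to the right of the entry $1$ no longer factors as a single independent block: occurrences of $42153$ and $24153$ straddling the junction interact with the $2134$-constraint in a way that the ``max-at-the-end'' patterns $3124$ and $2134$ cannot detect, so some admissible images are missed and the scaffolds of the two classes genuinely differ. The heart of a proof must be a corrected decomposition for the $2134$-world---with explicit boundary terms repairing the failure at $k=2,3$---under which the contributions to $(\LMI,\rma,\ides)$ still match those of the $3124$-world block by block. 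If such a corrected decomposition can be found, the final step would be routine: translate it into a joint functional equation (a catalytic variable for $\rma$ together with a suitable encoding of $\LMI$) and verify that it coincides with the corresponding equation for $\fS_n(3124,42153,24153)$, whose relevant specialization is already known to reduce to \eqref{eq:A(x,t)-eqn}.
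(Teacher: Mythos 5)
First, a point of framing: the statement you are addressing is not a theorem of the paper at all --- it is one of the two open conjectures in the Conclusion, supported only by numerical evidence, so there is no proof in the paper to compare against, and your argument has to stand entirely on its own. It does not. What you have written is a research plan whose decisive step is explicitly hypothetical: you concede that the product relations \eqref{id:LandR} have no analog for $k=2,3$ in the class $\fS_n(2134,42153,24153)$, and you then write that the heart of a proof ``must be a corrected decomposition'' and that \emph{if} such a decomposition can be found, the rest is routine. That corrected decomposition is never constructed, its boundary terms are never specified, and no block-by-block matching of $(\LMI,\rma,\ides)$ is ever verified. This missing step is precisely the obstruction the paper itself singles out (immediately after the unlabeled analog of Lemma~\ref{lem:alt-bound} in the Conclusion) as the reason Burstein's remaining conjectures are open; deferring it is deferring the entire problem, so the proposal contains no proof, conditional or otherwise.

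Second, the one concrete reduction you do commit to is false. You propose to reduce the conjecture to a bijection $\fS_n(312)\to\fS_n(213)$ preserving the triple $(\LMI,\rma,\ides)$ on the side blocks. No such bijection exists, already for $n=3$: in $\fS_3(312)$ the permutation $213$ has $\LMI=\{1,2\}$, $\rma=1$, $\ides=1$, whereas the unique permutation of $\fS_3(213)$ with $\LMI=\{1,2\}$ is $312$, which has $\rma=2$. Hence even the pair $(\LMI,\rma)$ is not equidistributed over these two classes, and the reduction collapses. (This does not endanger the conjecture itself, because in the decomposition $\pi=u\,1\,v$ with $\max(u)<\min(v)$ underlying \eqref{F2}, the block $u$ contributes nothing to $\rma(\pi)$ --- every right-to-left maximum of $\pi$ lies in $v$ --- so what a block argument actually requires from the side blocks is only the joint distribution of $(\LMI,\ides)$; but that weaker statement is not what you ask for, and you give no argument for it either.) With the corrected decomposition absent and the stated sub-lemma false, neither of the two supports of your outline is in place.
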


\begin{conjecture}
	The pair of Eulerian statistics $(\des,\ides)$ over  $\fS_n(2134,42153,\allowbreak 24153)$ has the same distribution as $(\asc,\iasc)$ over $\fS_n(2143,42135,24135)$. 
\end{conjecture}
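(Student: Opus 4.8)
The plan is first to understand why $(\des,\ides)$ and $(\asc,\iasc)$ are the statistics that ought to match, and then to compute both sides through the alternating-length machinery of Section~2. Writing $\pi^c_i:=n+1-\pi_i$ for the complement and $\pi^r$ for the reverse $\pi_n\cdots\pi_1$, one has $\asc(\pi^c)=\des(\pi)$, and, since $(\pi^c)^{-1}=(\pi^{-1})^r$, also $\iasc(\pi^c)=\asc((\pi^{-1})^r)=\des(\pi^{-1})=\ides(\pi)$; thus complementation is precisely the symmetry that converts $(\des,\ides)$ into $(\asc,\iasc)$. Consequently the conjecture is equivalent to the existence of a bijection $\fS_n(2134,42153,24153)\to\fS_n(2143,42135,24135)$ sending $(\des,\ides)$ to $(\asc,\iasc)$, and it is suggestive that the two pattern sets differ only by transposing the final two letters of each pattern. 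Crucially, the lemma in the concluding section already gives $\alt(\pi)\le4$ for both of these classes, so the Section~2 decomposition is available on each side.

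With this in hand I would introduce $P(x,t,v):=\sum_{n\ge1}x^n\sum_{\pi\in\fS_n(2134,42153,24153)}t^{\des(\pi)}v^{\ides(\pi)}$ and $Q(x,t,v):=\sum_{n\ge1}x^n\sum_{\sigma\in\fS_n(2143,42135,24135)}t^{\asc(\sigma)}v^{\iasc(\sigma)}$, and aim to prove that both satisfy one and the same functional equation; the natural candidate is a two-parameter refinement of \eqref{eq:A(x,t)-eqn}, so that setting $t=v=1$ would simultaneously settle Burstein's second and third conjectures. Using $\alt\le4$, each class splits as $\biguplus_{i=0}^4\fS_{n,i}$ and further into the $L$/$R$ pieces of Section~2. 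The task is to reprove the analogues of Lemma~\ref{lem:LandR} and Lemma~\ref{lem:F123} while recording the second statistic: for $P$ one tracks how each concatenation or inflation changes $\des$, for $Q$ how it changes $\asc$, every seam producing or removing exactly one (inverse) descent or ascent and hence a single power of $t$ or $v$; the complement symmetry of the first paragraph is what makes it plausible that the two bivariate systems collapse to the same equation.

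The main obstacle is exactly the one the authors flag after the first proof of Hong and Li's conjecture and again in the concluding remarks: for the pattern sets $\{2134,42153,24153\}$ and $\{2143,42135,24135\}$ the clean bijections behind \eqref{id:LandR}, \eqref{F2} and especially \eqref{F3} fail when $k=2$ and $k=3$, since the subword argument that let one concatenate an arbitrary restricted permutation on the right no longer preserves the class. I expect the remedy to require a finer dissection of the entries straddling the letter $1$ — a multi-case split in the spirit of Cases (2a)--(2d) of the inversion-sequence proof — together with new auxiliary series playing the role of $\cC(x,t)$ for the relevant ``$312$-like'' blocks. Verifying that the additional $\des$- (resp.\ $\asc$-) accounting is consistent across the two sets, so that $P$ and $Q$ satisfy identical equations, is where essentially all of the difficulty concentrates.

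Once $P$ and $Q$ are shown to obey the same refinement of \eqref{eq:A(x,t)-eqn} with the same low-order terms, uniqueness of the formal power series solution forces $P=Q$, which is the claimed equidistribution. As a more transparent alternative I would, in parallel, attempt to construct the descent-reversing bijection of the first paragraph directly, organizing it around the position of the entry~$1$ and the transposition of the last two pattern letters; a successful such map would prove the conjecture without any kernel-type bookkeeping.
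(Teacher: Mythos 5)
There is a genuine gap here, and it is worth being precise about its nature: the statement you were asked to prove is left \emph{open} in the paper itself --- it appears in the concluding section as a conjecture supported only by numerical evidence, so there is no paper proof to match --- and your submission is likewise a research plan rather than a proof. Your first paragraph is correct as far as it goes: $\asc(\pi^c)=\des(\pi)$ and, since $(\pi^c)^{-1}=(\pi^{-1})^r$, also $\iasc(\pi^c)=\ides(\pi)$. But this buys nothing, because complementation does not map either pattern class to the other: the complements of $2143$, $42135$, $24135$ are $3412$, $24531$, $42531$, so the symmetry merely restates the conjecture as a $(\des,\ides)$-equidistribution between $\fS_n(2134,42153,24153)$ and $\fS_n(3412,24531,42531)$; your ``equivalent'' reformulation as the existence of a statistic-transporting bijection is tautological.

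The substantive content of your plan --- that both bivariate series $P(x,t,v)$ and $Q(x,t,v)$ satisfy one and the same refinement of \eqref{eq:A(x,t)-eqn}, after which uniqueness of the formal power series solution would force $P=Q$ --- hinges entirely on proving analogues of Lemma \ref{lem:LandR} and Lemma \ref{lem:F123} for these two classes, and this is precisely the step the paper flags as the obstruction: while the $\alt(\pi)\le 4$ bound does carry over (the paper's final lemma), the concatenation/inflation bijections behind \eqref{id:LandR}, \eqref{F2} and \eqref{F3} break down for $k=2$ and $k=3$, because for the patterns $2134$ and $2143$ an occurrence can straddle the two blocks on either side of the entry $1$, so an arbitrary class member can no longer be glued on freely. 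You acknowledge this (``where essentially all of the difficulty concentrates'') but offer no construction to overcome it --- no identified auxiliary class playing the role of $\fS_n(312)$, no case analysis replacing the inflation step, and no candidate for the refined functional equation whose solution would specialize correctly at $t=v=1$. Without that, neither the kernel-style route nor the direct bijection sketched in your last paragraph gets off the ground; the proposal correctly maps the terrain but proves nothing beyond what the paper already states.
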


\subsection*{Acknowledgements}

S.~Chern was supported by a Killam Postdoctoral Fellowship from the Killam Trusts.
S.~Fu was supported by the National Natural Science Foundation of China grant 12171059 and the Natural Science Foundation Project of CQ CSTC (No.~cstc2021jcyj-msxmX0693).
 Z.~Lin was supported by the National Natural Science Foundation of China grant 12271301 and  the project of Qilu Young Scholars of Shandong University.

\bibliographystyle{amsplain}

\begin{thebibliography}{99}

	\bibitem{BJS1993}
	{\sc S. C. Billey, W. Jockusch, and R. P. Stanley}, {\em Some combinatorial properties of Schubert polynomials}, J. Algebraic Combin. {\bf2} (1993), 345--374.

	\bibitem{BB2016}
	{\sc J. Bloom and A. Burstein}, {\em Egge triples and unbalanced Wilf-equivalence}, Australas. J. Combin. {\bf64} (2016), 232--251.

	\bibitem{BP2015}
	{\sc A. Burstein and J. Pantone}, {\em Two examples of unbalanced Wilf-equivalence}, J. Combin. {\bf6} no. 1-2 (2015), 55--67.

	\bibitem{Che2022}
	{\sc S. Chern}, {\em On 0012-avoiding inversion sequences and a conjecture of Lin and Ma}, Quaest. Math., in press. \doi{10.2989/16073606.2022.2039973}.
	
	\bibitem{CMSW2016}
	{\sc S. Corteel, M. A. Martinez, C. D. Savage, and M. Weselcouch}, {\em Patterns in inversion sequences I}, Discrete Math. Theor. Comput. Sci. {\bf18} (2016), no. 2, Paper No. 2, 21 pp.

	\bibitem{Def2021}
	{\sc C. Defant}, {\em Enumeration of stack-sorting preimages via a decomposition lemma}, Discrete Math. Theor. Comput. Sci. {\bf22} (2021), no. 2, Paper No. 3, 20 pp.
	
	\bibitem{Dum1974}
	{\sc D. Dumont}, {\em Interpr\'etations combinatoires des nombres de Genocchi}, Duke Math. J. {\bf41} (1974), 305--318.

	
	\bibitem{FS1970}
	{\sc D. Foata and M.-P. Sch\"utzenberger}, {\em Th\'eorie G\'eom\'etrique des Polyn\^omes Eul\'eriens}, Springer-Verlag, Berlin--New York, 1970.
	
	\bibitem{FLZ2018}
	{\sc S. Fu, Z. Lin, and J. Zeng}, {\em On two unimodal descent polynomials}, Discrete Math. {\bf341} (2018), no. 9, 2616--2626.
	
	\bibitem{HL2021}
	{\sc L. Hong and R. Li}, {\em Length-four pattern avoidance in inversion sequences}, preprint. Available at arXiv:2112.15081.
	
	\bibitem{Kit2011}
	{\sc S. Kitaev}, {\em Patterns in Permutations and Words}, Springer, Heidelberg, 2011.
	
	\bibitem{Knu1997}
	{\sc D. E. Knuth}, {\em The Art of Computer Programming, Vol. 1: Fundamental Algorithms. Third Edition}, Addison-Wesley, 1997.

	\bibitem{Lin2020}
	{\sc Z. Lin}, {\em Patterns of relation triples in inversion and ascent sequences}, Theoret. Comput. Sci. {\bf804} (2020), 115--125.

	\bibitem{FL2021}
	{\sc Z. Lin and S. Fu}, {\em On $\underline{12}0$-avoiding inversion and ascent sequences}, European J. Combin. {\bf93} (2021), Paper No. 103282, 12 pp.
	
	\bibitem{LK2018}
	{\sc Z. Lin and D. Kim}, {\em A sextuple equidistribution arising in pattern avoidance}, J. Combin. Theory Ser. A {\bf155} (2018), 267--286.
	
	\bibitem{LK2021}
	{\sc Z. Lin and D. Kim}, {\em Refined restricted inversion sequences}, Ann. Comb. {\bf25} (2021), no. 4, 849--875.
	
	\bibitem{Mac1915}
	{\sc P. A. MacMahon}, {\em Combinatory Analysis}, Cambridge University Press, London, 1915/16.
	
	\bibitem{MS2015}
	{\sc T. Mansour and M. Shattuck}, {\em Pattern avoidance in inversion sequences}, Pure Math. Appl. (PU.M.A.) {\bf25} (2015), no. 2, 157--176.
	
	\bibitem{Per2015}
	{\sc T. K. Petersen}, {\em Eulerian Numbers}, Birkhäuser/Springer, New York, 2015.
	
	\bibitem{SS2012}
	{\sc C. D. Savage and M. J. Schuster},
	{\em Ehrhart series of lecture hall polytopes and Eulerian polynomials for inversion sequences}, J. Combin. Theory Ser. A {\bf119} (2012), no. 4, 850--870.
	
	\bibitem{OEIS}
	{\sc N. J. A. Sloane}, On-Line Encyclopedia of Integer Sequences; \url{http://oeis.org}.
	
	\bibitem{Vat2015}
	{\sc V. Vatter}, {\em Permutation classes, in: Handbook of enumerative combinatorics}, 753--833, CRC Press, Boca Raton, FL, 2015.

\end{thebibliography}

\end{document}